\definecolor{Prune}{RGB}{99,0,60} 
\definecolor{corange}{rgb}{0.898, 0.621, 0.0}
\definecolor{cskyblue}{rgb}{0.336, 0.703, 0.910}
\definecolor{cbluishgreen}{rgb}{0, 0.617, 0.449}
\definecolor{cyellow}{rgb}{0.937, 0.890, 0.258}
\definecolor{cblue}{rgb}{0, 0.445, 0.695}
\definecolor{cred}{rgb}{1, 0, 0}
\definecolor{cpurple}{rgb}{0.797, 0.473, 0.652}
\theoremstyle{definition}
\newtheorem{theorem}{Theorem}[section] 
\newtheorem{proposition}[theorem]{Proposition} 
\newtheorem{lemma}[theorem]{Lemma} 
\newtheorem{corollary}[theorem]{Corollary}
\newtheorem{definition}[theorem]{Definition}
\newtheorem{example}[theorem]{Example}
\newtheorem{remark}[theorem]{Remark}
\def\RR{\mathbb{R}}
\def\cBT{\mathcal{BT}}
\def\cC{\mathcal{C}}
\def\cPT{\mathcal{PT}}
\def\fS{\mathfrak{S}}
\DeclareMathOperator{\PAsoc}{Assoc}
\DeclareMathOperator{\PCube}{Cube}
\DeclareMathOperator{\PPT}{PT}
\newcommand{\gbinom}[2]{{\mleft(\genfrac..{0pt}{}{#1}{#2}\mright)}}
\newcommand\bbinom[2]%
\newlength\myheight{}
\newlength\mydepth{}
\settototalheight\myheight{Xygp}
\DeclareRobustCommand*\nonee{\raisebox{-\mydepth}{\includegraphics[height=\myheight]{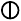}}}
\DeclareRobustCommand*\upp{\raisebox{-\mydepth}{\includegraphics[height=\myheight]{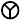}}}
\DeclareRobustCommand*\downn{\raisebox{-\mydepth}{\includegraphics[height=\myheight]{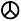}}}
\DeclareRobustCommand*\uppdownn{\raisebox{-\mydepth}{\includegraphics[height=\myheight]{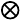}}}
\newcommand{\defn}[1]{\emph{\color{cblue} #1}} 
\title{Inversion and Cubic Vectors for Permutrees}
\author{Daniel Tamayo Jiménez}
\newcommand{\Addresses}{{
  \bigskip
  \footnotesize

  (D. Tamayo Jim\'enez), \textsc{Universit\'e Paris-Saclay, GALaC, Gif-sur-Yvette, France}\par\nopagebreak
  \textit{Email adress:}, \texttt{daniel.tamayo-jimenez@lri.fr}\par\nopagebreak
  \textit{URL:} \texttt{\url{https://sites.google.com/view/danieltamayo22/}}

}}
\date{\today}
\begin{document}

\maketitle


\begin{abstract}
	We introduce two generalizations of bracket vectors from binary trees to permutrees. These new vectors help describe algebraic and geometric properties of the rotation lattice of permutrees defined by Pilaud and Pons. The first generalization serves the role of an inversion vector for permutrees allowing us to define an explicit meet operation and provide a new constructive proof of the lattice property for permutree rotation lattices. The second generalization, which we call cubic vectors, allows for the construction of a cubic realization of these lattices which is proven to form a cubical embedding of the corresponding permutreehedra. These results specialize to those known about permutahedra and associahedra.
\end{abstract}

\section{Introduction}

Permutrees are combinatorial objects that generalize and amalgam binary trees, permutations, binary words, and Cambrian trees. In the same way that binary trees can be seen as labelled trees with one parent and two children, permutrees are labelled oriented trees whose vertices can have one or two parents and one or two children. They were introduced by Pilaud and Pons in~\cite{PP18} where they addressed the combinatorial, geometric, and algebraic features of these trees. They constructed lattices using congruences of the weak order based on~\cite{R06}, polytopes called permutreehedra that encapsulate permutahedra and associahedra, and Hopf algebras that contain the ones studied in~\cite{CP17},~\cite{GKLL95},~\cite{LR98}, and~\cite{MR95}.

In this paper, inspired by~\cite{C22} and~\cite{HT72}, we give two generalizations of the bracket vector for binary trees to obtain algebraic and geometric results about permutree rotation lattices. Bracket vectors were first introduced by S. Huang and D. Tamari in~\cite{HT72} to construct a meet operation in the rotation order on binary trees. This allowed them to obtain a simple proof of the lattice property of this poset. Due to their simplicity, bracket vectors have been used through the years via other generalizations such as in~\cite{CPS19},~\cite{C22},~\cite{FMN21}, and~\cite{P86}.

Our first generalization of bracket vectors, which we call inversion vectors, gives us a characterization of permutrees through inversion sets (Lemma~\ref{lem:permutree_inversion_sets}). With this we construct a meet operation between permutrees (Theorem~\ref{thm:permutree_meet}) and obtain a new constructive proof of the lattice property of the poset of rotations on permutrees (Corollary~\ref{cor:permutrees_are_lattices}). This unifies and extends the known results on permutations and binary trees along with binary words and Cambrian trees, to any interpolation between these objects.

The second generalization called cubic vectors, expands the inversion vector and then places it in space such that the resulting structure is a cubical realization of the permutree rotation lattice. That is, an embedding of the respective permutreehedra onto a stretched cube of its same dimension (Theorem~\ref{thm:cubic_property_embedding}). This specifies to the cubic realizations of~\cite{C22} on the associahedron when restricted to binary trees elements instead of general Tamari intervals and those of~\cite{BF71} and~\cite{RR02} when dealing with the permutahedron.

\subsection{Binary Trees and Bracket Vectors}\label{ssec:binary_trees}

Throughout this work we denote the set $\{1,\ldots,n\}$ as $[n]$.

\defn{Binary trees} are rooted planar trees whose vertices have one parent and two children. They are well studied combinatorial objects counted by the \defn{Catalan numbers}. One can label the vertices of a binary tree from $1$ to $n$, by taking an anti-clockwise walk of the tree starting at its root, and labelling a vertex when it is visited for the second time. This is known as the \defn{inorder labelling}. With this labelling we denote~$L_i$ (resp.~$R_i$) the left (resp.\ right) subtree with of the vertex labelled by~$i$. As in~\cite{P86}, one can induce a partial order on inordered binary trees through the rotation operation on the edges described in Figure~\ref{fig:binary_tree_rotation} where~$1\leq i<j\leq n$. Figure~\ref{fig:tamari_3_bracket_sets} presents the resulting poset when $n=3$ with the inorder labelling of each binary tree.

\begin{figure}[h!]
	\centering
	\includegraphics[scale=1.5]{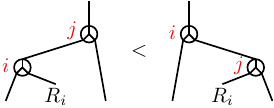}
	\caption{Rotation on binary trees.}\label{fig:binary_tree_rotation}
\end{figure}

Huang and Tamari gave a constructive proof in~\cite{HT72} that showed that these posets always have a meet and therefore are in fact lattices. Due to this, these posets are commonly referred as \defn{Tamari Lattices} and here we denote them by $\mathcal{BT}_n$. The meet construction they gave relies on a bijection between binary trees on $n$ vertices and bracketing functions $f:[n]\rightarrow [n]$. These bracketing functions consist on a vector called the \defn{bracket vector} that records in its $i$-th entry all elements between $i$ and $f(i)$ including $f(i)$. We forgo this definition and instead use the following equivalent formulation.

\begin{definition}\label{def:bracket_vector}
	Let~$T$ be an inordered binary tree with vertex set~$[n]$. Its \defn{bracket set} is~$B(T):=\{(i,j)\,:\,j\in R_i\}$ and its \defn{bracket components} are~${B(T)}_i=\{j\in [n] \,:\, (i,j)\in B(T)\}$.
	To a bracket set we associate a \defn{bracket vector}~$\vec{b}(T)=(b_1,\ldots,b_{n-1})$ where~$b_i=|{B(T)}_i|$.
\end{definition}

Notice that we do not consider~${B(T)}_n$ as~$R_n=\emptyset$. Figure~\ref{fig:tamari_3_bracket_sets} presents all bracket sets for~$\cBT_3$. It is possible to characterize which vectors are bracket vectors of binary trees with our terminology.

\begin{figure}[h!]
	\centering
	\includegraphics[scale=0.8]{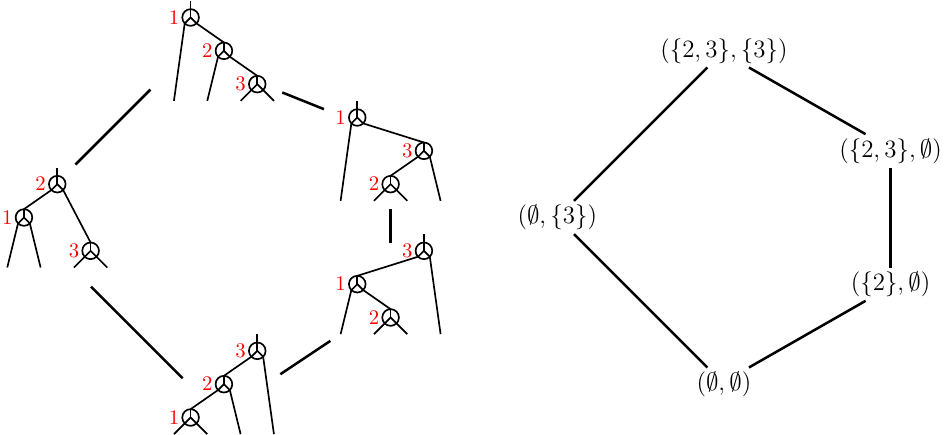}
	\caption{ The Tamari lattice for~$n=3$ and its corresponding bracket sets represented by their components.}\label{fig:tamari_3_bracket_sets}
\end{figure}

\begin{proposition}[{\cite{HT72}}]\label{prop:bracket_vector_characterization}
	The bracket set map is a bijection from binary trees to the sets~$B\in 2^{[n]}$ such that their components satisfy
	\begin{enumerate}
		\itemsep0em
		\item $B_i=\emptyset$ or~$B_i=\{i+1,i+2,\ldots,i+l\}$ for some~$l>0$,

		\item if~$j\in B_i$, then~$B_j\subseteq B_i$.
	\end{enumerate}
\end{proposition}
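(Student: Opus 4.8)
The plan is to prove the two directions separately: first that every binary tree's bracket set satisfies conditions (1) and (2), and then that every set satisfying (1) and (2) arises from a unique binary tree.

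For the forward direction, fix an inordered binary tree $T$ and consider $B(T)_i = R_i$, the set of vertices in the right subtree of $i$. For condition (1): by the defining property of the inorder labelling, the vertices of any subtree form a contiguous interval of $[n]$, and the right subtree $R_i$ (when nonempty) consists precisely of the vertices visited immediately after $i$, so $R_i = \{i+1,\ldots,i+l\}$ where $i+l$ is the largest label in the subtree rooted at $i$. Hence $B(T)_i$ is either empty or an initial segment of integers starting at $i+1$. For condition (2): if $j \in R_i$, then $j$ is a vertex of the right subtree of $i$, hence the entire subtree rooted at $j$ — in particular its right subtree $R_j$ — is contained in the subtree rooted at $i$; since all these vertices have labels greater than $i$ (they come after $i$ in the inorder walk), and they all lie in $R_i$, we get $R_j = B(T)_j \subseteq R_i = B(T)_i$.

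For the reverse direction, I would argue that a set $B$ satisfying (1) and (2) reconstructs a unique binary tree, by induction on $n$. The key observation is that the root of the tree must be the vertex $r$ whose left subtree is $\{1,\ldots,r-1\}$ and whose right subtree is $\{r+1,\ldots,n\}$; equivalently, $r$ is determined by $B_r = \{r+1,\ldots,n\}$ together with the requirement that no $B_i$ for $i < r$ reaches past $r-1$ — one shows using (1) and (2) that there is exactly one such $r$ (take the largest $l$ with $B_1 \cup \cdots$ forming the left chain, or more cleanly: $r$ is the unique index not contained in any $B_i$ with $i$ outside the appropriate range). Once $r$ is identified, conditions (1) and (2) restrict to valid configurations on $\{1,\ldots,r-1\}$ and on $\{r+1,\ldots,n\}$ (the interval condition is inherited, and (2) guarantees no bracket component crosses $r$), so by induction each half is the bracket set of a unique binary tree, and gluing them under a root $r$ gives the unique tree with bracket set $B$. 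Checking that this inverse is well-defined and two-sided is routine.

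The main obstacle is pinning down cleanly how the root $r$ is read off from $B$ and verifying that the two restricted sets genuinely satisfy (1) and (2) on their respective intervals — in particular that condition (2) prevents any "crossing" bracket component, which is what makes the decomposition into left and right parts disjoint and self-contained. Once the recursive decomposition is set up correctly, the bijectivity and the induction go through without difficulty, and one recovers that $\vec b(T)$ determines $T$.
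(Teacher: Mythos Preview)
The paper does not give its own proof of this proposition: it is stated with a citation to~\cite{HT72} and used as background, so there is nothing in the paper to compare your argument against. (The closest the paper comes is Lemma~\ref{lem:permutree_inversion_sets}, which proves an analogous characterization for general permutrees via the insertion algorithm, but that is a different statement with a different list of conditions.)

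On its own merits your sketch is sound. The forward direction is complete. For the reverse direction your identification of the root is in fact correct, even though you leave it tentative: taking $r$ to be the smallest index with $B_r=\{r+1,\dots,n\}$ (with the convention $B_n=\emptyset$), condition~(2) forces $B_i\subseteq\{i+1,\dots,r-1\}$ for every $i<r$, since $r\in B_i$ would give $B_r\subseteq B_i$ and hence $B_i=\{i+1,\dots,n\}$, contradicting minimality of $r$. This is exactly the ``no crossing'' property you need, and the recursion then goes through on $\{1,\dots,r-1\}$ and $\{r+1,\dots,n\}$ as you describe. So the only missing piece in your write-up is making that minimality argument explicit; once you do, the induction is clean.
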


The bracket set has similarities to inversions of permutations. As such, it is the key ingredient in formulating the meet operation as follows.

\begin{proposition}[{\cite{HT72}}]\label{prop:tamari_lattice_meet}
	Given two binary trees~$T,T'$ with~$n$ vertices, there exists the binary tree~$T\wedge T'$ under the rotation order. Moreover, it satisfies for all~$i\in[n-1]$ \begin{equation}\label{eq:binary_meet}{B(T\wedge T')}_i={B(T)}_i\cap {B(T')}_i.\end{equation}
\end{proposition}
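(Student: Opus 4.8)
The plan is to prove the statement in two movements: first show that the componentwise intersection $C_i := {B(T)}_i \cap {B(T')}_i$ satisfies the two conditions of Proposition~\ref{prop:bracket_vector_characterization}, so that it is the bracket set $B(S)$ of a genuine binary tree $S$; then show that this $S$ is the meet of $T$ and $T'$ in the rotation order. Both movements rely only on the characterization in Proposition~\ref{prop:bracket_vector_characterization} together with the (standard, and surely established earlier in the paper) fact that the rotation order on $\cBT_n$ coincides with inclusion of bracket sets, i.e.\ $T \leq T'$ if and only if $B(T) \subseteq B(T')$ componentwise.

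For the first movement I would argue as follows. Condition~(1): each of ${B(T)}_i$ and ${B(T')}_i$ is either empty or an initial segment $\{i+1,\dots,i+\ell\}$; the intersection of two such sets is again of this form (the shorter initial segment, or $\emptyset$), so $C_i$ satisfies~(1). Condition~(2): suppose $j \in C_i$. Then $j \in {B(T)}_i$, so by condition~(2) for $T$ we get ${B(T)}_j \subseteq {B(T)}_i$; likewise ${B(T')}_j \subseteq {B(T')}_i$. Intersecting, $C_j = {B(T)}_j \cap {B(T')}_j \subseteq {B(T)}_i \cap {B(T')}_i = C_i$. Hence by Proposition~\ref{prop:bracket_vector_characterization} there is a unique binary tree $S$ with $B(S)_i = C_i$ for all $i$, and its bracket vector has $i$-th entry $|C_i| = |{B(T)}_i \cap {B(T')}_i|$.

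For the second movement, observe that $B(S) \subseteq B(T)$ and $B(S) \subseteq B(T')$ componentwise by construction, so $S \leq T$ and $S \leq T'$; thus $S$ is a common lower bound. Conversely, if $U$ is any binary tree with $U \leq T$ and $U \leq T'$, then $B(U)_i \subseteq {B(T)}_i$ and $B(U)_i \subseteq {B(T')}_i$ for every $i$, hence $B(U)_i \subseteq {B(T)}_i \cap {B(T')}_i = B(S)_i$, so $U \leq S$. Therefore $S$ is the greatest lower bound, i.e.\ $S = T \wedge T'$, and~\eqref{eq:binary_meet} holds by definition of $S$.

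The only genuine content — and the step I expect to be the crux — is the stability of conditions~(1) and~(2) under componentwise intersection; everything after that is a formal consequence of the order-embedding by bracket sets. Condition~(1) is immediate, so the real work is the short implication $j \in C_i \Rightarrow C_j \subseteq C_i$, which is where the "transitivity-like" axiom~(2) of bracket sets is used in an essential way. If the paper has not yet recorded that rotation order equals componentwise inclusion of bracket sets, I would insert that as a preliminary remark (it follows by checking that a single rotation, as in Figure~\ref{fig:binary_tree_rotation}, adds exactly one pair to the bracket set and that the cover relations of the poset are exactly these single rotations).
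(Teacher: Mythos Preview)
Your proof is correct and is essentially the original Huang--Tamari argument. Note, however, that the paper does not actually prove Proposition~\ref{prop:tamari_lattice_meet}: it is quoted from~\cite{HT72} as background. What the paper does instead is prove the general permutree version, Theorem~\ref{thm:permutree_meet}, whose meet formula carries an extra ``cotransitivity-repairing'' intersection
\[
\{(i,j)\,:\,\forall\,i<l<j,\ (i,l)\text{ or }(l,j)\in B(T)\cap B(T')\},
\]
and then observes in Remark~\ref{rem:recovering_binary_meet_from_permutrees} that this extra set is redundant when $\delta=\downn^n$, recovering~\eqref{eq:binary_meet}. So your direct route---check that componentwise intersection preserves conditions~(1) and~(2) of Proposition~\ref{prop:bracket_vector_characterization}, then invoke the order-embedding---is the elementary, special-case argument, while the paper's route passes through the heavier permutree machinery (Lemma~\ref{lem:permutree_inversion_sets} and Lemma~\ref{lem:inversion_set_inclusion}) and specializes afterwards. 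Your caveat about the equivalence ``$T\leq T'\iff B(T)\subseteq B(T')$'' is well placed: the paper does not state this for binary trees in Section~\ref{ssec:binary_trees}; the permutree analogue is Lemma~\ref{lem:inversion_set_inclusion}.
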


Figure~\ref{fig:tamari_meet} illustrates the meet operation between two binary trees in~$\cBT_5$. Calculating their bracket sets via Equation~\ref{eq:binary_meet} yields~$(\{2,3,4,5\},\emptyset,\emptyset,\{5\})\cap(\emptyset,\emptyset,\{4,5\},\{5\})=(\emptyset,\emptyset,\emptyset,\{5\})$.

\begin{figure}[h!]
	\centering
	\includegraphics[scale=1]{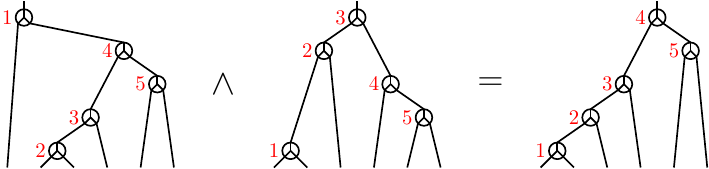}
	\caption{ The meet of two binary trees.}\label{fig:tamari_meet}
\end{figure}

\begin{corollary}[{\cite{HT72}}]\label{thm:tamari_lattice_proof}
	The poset of binary trees~$(\cBT_n,\leq)$ is a lattice.
\end{corollary}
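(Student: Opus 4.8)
The plan is to deduce the lattice property from the existence of a meet operation (Proposition~\ref{prop:tamari_lattice_meet}) together with the fact that $\cBT_n$ is finite and has a maximum element. Recall the standard order-theoretic fact: a finite poset in which every pair of elements has a meet (a greatest lower bound) and which possesses a global maximum is automatically a lattice, because the join of two elements $T, T'$ can be recovered as the meet of the (nonempty, since it contains the maximum) set of common upper bounds of $T$ and $T'$. So the work splits into three easily dispatched checks.

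First I would invoke Proposition~\ref{prop:tamari_lattice_meet} to guarantee that $T \wedge T'$ exists for every pair $T, T' \in \cBT_n$; this is the substantive input and it has already been established. Second, I would observe that $\cBT_n$ has a maximum element under the rotation order — namely the right comb, the binary tree all of whose left subtrees are empty — equivalently the unique tree $T$ with ${B(T)}_i = \{i+1, \ldots, n\}$ for all $i \in [n-1]$, which by Proposition~\ref{prop:bracket_vector_characterization} is a valid bracket set and which componentwise contains every other bracket set, hence sits above every binary tree in the order. (One could equally use the left comb as the minimum, but the maximum is what the argument needs.) Third, I would spell out the general lemma: given $T, T'$, let $U = \{ S \in \cBT_n : T \leq S \text{ and } T' \leq S \}$; this set is nonempty because it contains the maximum, and $\cBT_n$ is finite, so the iterated meet $\bigwedge_{S \in U} S$ is well-defined and lies in $U$ (one checks $T \leq \bigwedge U$ and $T' \leq \bigwedge U$ because each $S \in U$ dominates $T$ and $T'$ and the meet is a greatest lower bound). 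This element is then the least upper bound $T \vee T'$, since any common upper bound of $T$ and $T'$ belongs to $U$ and hence dominates $\bigwedge U$.

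There is essentially no obstacle here: the only thing that requires a moment's care is confirming that the rotation order genuinely has a maximum (or that one may associate the right-comb bracket set via the characterization) and that taking finite iterated meets is legitimate — both are routine. The real content of the corollary is Proposition~\ref{prop:tamari_lattice_meet}, which we are permitted to assume, so the proof is a short application of the "finite meet-semilattice with top is a lattice" principle and can be stated in a few lines.
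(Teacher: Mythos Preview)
Your argument is correct and matches the paper's approach exactly: the paper also deduces the corollary from the meet construction (Proposition~\ref{prop:tamari_lattice_meet}) together with the boundedness of the poset, citing the same standard order-theoretic fact (a bounded meet-semilattice is a lattice) via~\cite[Prop.3.3.1]{S11} and~\cite[Lem.9-2.1]{R16}. The only difference is that you spell out the ``join as meet of upper bounds'' argument explicitly, whereas the paper simply points to the references.
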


The proof of Corollary~\ref{thm:tamari_lattice_proof} follows from the meet construction and the fact that the poset is bounded. See~\cite[Prop.3.3.1]{S11} or~\cite[Lem.9-2.1]{R16} for proofs of this fact.

\enskip{}

On top of the rotation poset being a lattice, the Tamari lattice also appears as the~$1$-skeleton of the \defn{associahedron}~$\PAsoc_n$  constructed equivalently as \begin{itemize}
	\item \cite{L04} the convex hull of the coordinates~$(l_1r_1,\ldots,l_nr_n)$ for~$T\in{\cBT}_n$ where~$l_i$ (resp.~$r_i$) is the number of leaves of~$L_i$ (resp.~$R_i$),
	\item \cite{SS93} the intersection of the following hyperplane and half-spaces \begin{equation*}
		      \left\{\mathbf{x}\in\RR^n \,:\,\sum_{i\in [n]}x_i=\gbinom{n+1}{2}\right\} \cap \bigcap_{1\leq i \leq j\leq n} \left\{\mathbf{x}\in\RR^n \,:\, \sum_{i\leq \ell\leq j}x_\ell\geq\gbinom{j-i+2}{2}\right\}.
	      \end{equation*}
\end{itemize}

Figure~\ref{fig:asoc4} contains an example of the associahedron in 3 dimensions.

\begin{figure}[h!]
	\centering
	\includegraphics[scale=0.6]{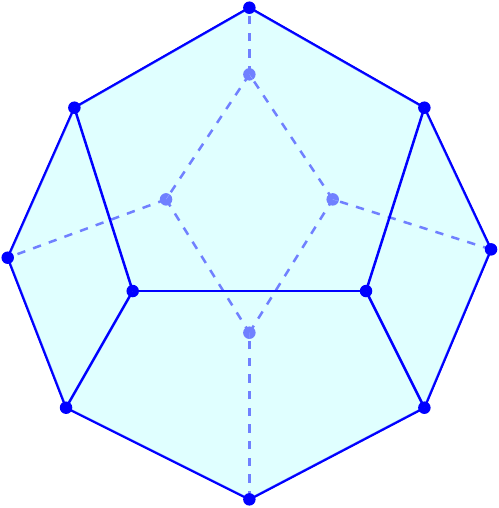}
	\caption{The associahedron~$\PAsoc_4$.}
	\label{fig:asoc4}
\end{figure}

The bracket vector also allows for a particular geometric realization of the Tamari lattice by defining a cubical embedding of the associahedron as follows.

\begin{proposition}[{\cite{P86},\cite{BW96}}]\label{:cubic_associahedron}
	The associahedron~$\PAsoc_n$ is embeddable in the stretched cube~$Q_{n-1}=[0,n-1]\times\cdots\times[0,1]$ via the function that sends a binary tree to its bracket vector.
\end{proposition}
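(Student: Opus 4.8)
The plan is to exhibit an explicit embedding map and verify it has the two properties required of a cubical embedding: injectivity on vertices and compatibility with the face/edge structure. The natural candidate is exactly the map $\beta : T \mapsto \vec{b}(T) = (b_1,\ldots,b_{n-1})$ sending a binary tree to its bracket vector, where $b_i = |B(T)_i|$. First I would note that this map lands in the stretched cube $Q_{n-1}$: by Proposition~\ref{prop:bracket_vector_characterization}(1), $B(T)_i$ is either empty or an interval $\{i+1,\ldots,i+\ell\}$ contained in $[n]$, so $0 \leq b_i \leq n-i$, which is precisely the range of the $i$-th coordinate of $Q_{n-1} = [0,n-1]\times[0,n-2]\times\cdots\times[0,1]$.

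Next I would establish injectivity. By Proposition~\ref{prop:bracket_vector_characterization} the bracket set map $T \mapsto B(T)$ is a bijection onto sets satisfying conditions (1)--(2); since each component $B(T)_i$ is forced by condition (1) to be the initial segment $\{i+1,\ldots,i+b_i\}$ of length $b_i$, the set $B(T)$ is completely determined by the integer vector $\vec{b}(T)$. Hence $\beta$ is injective, and in fact a bijection from $\cBT_n$ onto the integer points of $Q_{n-1}$ that satisfy the nestedness condition (2) rephrased in terms of the $b_i$ (namely: $j \in \{i+1,\ldots,i+b_i\}$ implies $j + b_j \leq i + b_i$). This identifies the vertex set of the image.

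The substantive part is to check that $\beta$ extends to a cubical embedding of the polytope $\PAsoc_n$ — that is, that the convex hull of the image points, together with the induced cell structure from $Q_{n-1}$, recovers the face lattice of $\PAsoc_n$, with edges of $\PAsoc_n$ (i.e.\ covering relations in $\cBT_n$, given by single rotations as in Figure~\ref{fig:binary_tree_rotation}) mapping to edges of the cube. The key computation is that a rotation at the edge $(i,j)$ with $1 \le i < j \le n$ changes the bracket vector in exactly one coordinate: moving $j$ from being a child into the right subtree $R_i$ (or vice versa) alters $B(T)_i$ by appending or removing a block, while conditions (1)--(2) force all other components to be unchanged; moreover consecutive rotations along a maximal chain move that coordinate by exactly one unit each time, so each edge of $\cBT_n$ maps to a unit edge of $Q_{n-1}$ parallel to a coordinate axis. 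I would then invoke (or cite, via~\cite{P86,BW96}) the standard fact that a bijection from the vertices of a polytope to lattice points of a box, sending edges to parallel box-edges and respecting the poset structure, induces a cellular (cubical) embedding; the higher faces of $\PAsoc_n$ correspond to subcubes obtained by letting a compatible set of coordinates vary freely between their endpoints. The main obstacle is precisely this last verification — that the local edge-to-edge correspondence globalizes to a genuine embedding of the whole polytopal complex rather than merely a vertex-and-edge map — and handling it cleanly requires the nestedness condition (2) to guarantee that the free coordinates of any face of $\PAsoc_n$ form a valid subcube of $Q_{n-1}$; this is where I would lean on the cited references rather than re-deriving the polytopal combinatorics from scratch.
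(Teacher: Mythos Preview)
The paper does not actually prove this proposition: it is stated as a known result with citations to~\cite{P86} and~\cite{BW96}, and no argument is given in the text. So in a strict sense there is nothing to compare your proposal against.

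That said, the paper does prove a strict generalization in Section~\ref{sec:cubic_vectors} (Theorems~\ref{thm:cubic_property_edge_direction}--\ref{thm:cubic_property_embedding}), which specializes to this proposition when~$\delta=\downn^n$. Your outline follows the same overall architecture as that general proof: range in~$Q_{n-1}$, injectivity of the vector map, edges of the rotation lattice mapping to coordinate directions, and then a facet/cell argument for the polytopal embedding. The main substantive difference is in the last step. You defer the verification that the vertex-and-edge map globalizes to a genuine cellular embedding to the cited references, whereas the paper carries this out explicitly: it constructs extremal permutrees realizing every corner of~$Q_{n-1}$ (so the convex hull is exactly the cube), and then exhibits a bijection between facets of the permutreehedron (indexed by edge cuts~$(I\,\|\,[n]\setminus I)$) and maximal cells of the cubical realization, by building the minimal and maximal~$\delta$-permutrees admitting a given edge cut and showing the corresponding interval sits in a coordinate hyperplane. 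Your instinct that this final step is ``the main obstacle'' is correct, and the paper's edge-cut argument is precisely the missing ingredient you would need to make your sketch self-contained.
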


Although this way of representing the associahedron has been known since the 80s (see~\cite{P86} and~\cite{BW96}), the explicit illustration of this embedding as an actual cube seems to date from the video~\cite{K93}. We invite (in genuine interest) the archaeological reader to find an older illustration of this embedding into~$Q_{n-1}$. This cubic phenomenon has appeared in recent works concerning Tamari intervals~\cite{C22} and parabolic Tamari lattices in Coxeter groups of type~$B$~\cite{FMN21}.

In Figure~\ref{fig:associahedron_cubic} we show the cubical embedding of~$\PAsoc_4$ following~\cite{K93}.

\begin{figure}[h!]
	\centering
	\includegraphics[scale=0.6]{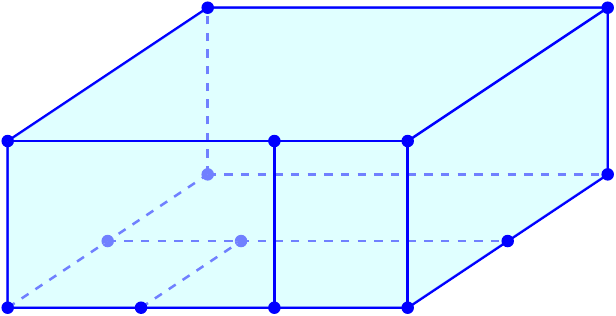}
	\caption{ The cubical embedding of~$\PAsoc_4$. Figure based on the video~\cite{K93}.}\label{fig:associahedron_cubic}
\end{figure}

\subsection{Permutrees}\label{ssec:permutrees}

Binary trees are part of a more general family of  combinatorial objects called permutrees. Defined by Pons and Pilaud in~\cite{PP18}, they generalize permutations and binary trees in such a way that they also capture binary sequences and \defn{Cambrian trees} (see~\cite{LP13} and~\cite{CP17}) that were motivated by the Cambrian lattices of~\cite{R06}.

A \defn{permutree} is a directed planar unrooted tree~$T$ with vertex set~$\{v_1,\ldots,v_n\}$ such that for each vertex~$v_i$:
\begin{enumerate}
	\itemsep0em
	\item $v_i$ has exactly one or two parents (outward neighbors) and one or two children (inward neighbors). We denote respectively~$LA_i$,~$RA_i$, (resp.~$LD_i$,~$RD_i$) the left and right ancestor (resp.\ descendant) subtree of~$v_i$. In the case that a vertex has only one ancestor (resp.\ descendant) subtree we denote it~$A_i$ (resp.~$D_i$),

	\item if~$v_i$ has two parents (resp.\ children), then all vertices~$v_j\in LA_i$ (resp.~$v_j\in LD_i$) satisfy~$j<i$ and all vertices~$v_k\in RA_i$ (resp.~$v_k\in RD_i$) satisfy~$i<k$.
\end{enumerate}
If~$v_j$ is a descendant of~$v_i$ we say that \defn{$j\to i$}. Given a permutree~$T$, its \defn{partial order} on~$[n]$ is given by~$j<i$ if and only if~$j\to i$.

The \defn{decoration} of a permutree~$T$ is the vector~$\delta(T)\in{\{\nonee,\downn,\upp,\uppdownn\}}^n$ with entries defined as
	\begin{equation*}
		{\delta(T)}_i=\left\{\begin{array}{ccl}
			\nonee    & \text{ if } & v_i \text{ has one parent and one child,}     \\
			\downn    & \text{ if } & v_i \text{ has one parent and two children,}  \\
			\upp      & \text{ if } & v_i \text{ has two parents and one child,}    \\
			\uppdownn & \text{ if } & v_i \text{ has two parents and two children.}
		\end{array}\right.
	\end{equation*} Letting~$\delta:=\delta(T)$ we say that~$T$ is a~$\delta$-permutree and denote by \defn{$\cPT_n(\delta)$} the collection of all~$\delta$-permutrees on~$n$ vertices.

\begin{example}\label{ex:permutree_examples}
	Permutrees~$\cPT_n(\delta)$ correspond to: \begin{itemize}
		\itemsep0em
		\item permutations when~$\delta=\nonee^n$,
		\item binary trees when~$\delta=\downn^n$,
		\item Cambrian trees when~$\delta\in{\{\downn,\upp\}}^n$,
		\item binary sequences of length~$n-1$ when~$\delta=\uppdownn^n$ via the correspondence that the coordinates of the binary sequence are~$s_i=0$ (resp.~$s_i=1$) if the vertex~$v_i$ is a child (resp.\ parent) of~$v_{i+1}$.
	\end{itemize}

	Figure~\ref{fig:permutree-examples} contains several examples of permutrees with distinct decorations.
\end{example}

\begin{figure}[h!]
	\centering
	\includegraphics[scale=1.2]{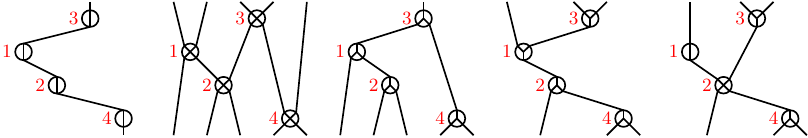}
	\caption{ 5 examples of~$\delta$-permutrees on 4 vertices. These permutrees respectively correspond to the permutation~$4213$, the binary sequence~$101$, a rooted binary tree, a Cambrian tree and a generic permutree.}\label{fig:permutree-examples}
\end{figure}

\begin{remark}\label{rem:permutree_leftmost_rightmost_labels}
	Notice that the decorations~$\delta_1$ and~$\delta_n$ do not actually affect the structure of the~$\delta$-permutree since the subtrees~$LA_1$,~$LD_1$,~$RA_n$, and~$RD_n$ are always empty. We never make use of these subtrees, so we always take~$\delta_1=\delta_n=\nonee$ for simplicity.
\end{remark}

When drawing~$\delta$-permutrees, their definition allows us to present them in a non-ambiguous way. All edges are assumed to be directed upwards and thus, they are presented unoriented, the vertices~$v_i$ appear from left to right in ascending order. This follows from the \defn{insertion algorithm} of~\cite{PP18}.

Like for binary trees, for any fixed decoration~$\delta$ one can define edge rotations on~$\delta$-permutrees.

\begin{definition}\label{def:permutree_rotations}
	Let~$T\in\cPT_n(\delta)$ be a~$\delta$-permutree with an edge~$i\to j$ where~$1\leq i<j\leq n$. An \defn{ij-edge rotation} is the operation of replacing the (right) subtree of~$v_i$ by~the (left) subtree of~$v_j$ and the (left) subtree by the tree with root~$v_i$, maintaining rest of~$T$ intact. Figure~\ref{fig:binary_tree_rotation} shows all possible~$ij$-edge rotations.

	The \defn{edge cut} in~$T$ defined by~$i\to j$ is the ordered partition~$(I\,\|\, [n]\setminus{I})$ of the vertex set of~$T$ where~$I$ are the vertices whose undirected paths to~$v_i$ do not visit~$v_j$.
\end{definition}

\begin{example}\label{ex:permutree_edge_cut}
	Consider the~$\nonee\uppdownn\upp\downn$-permutree given at the far right Figure~\ref{fig:permutree-examples}. The respective edge cuts of the directed edges~$2\to 1$,~$2\to 3$, and~$4\to 2$, are~$(\{2,3,4\}\,\|\, \{1\})$,~$(\{1,2,4\}\,\|\, \{3\})$, and~$(\{4\}\,\|\, \{1,2,3\})$.
\end{example}

\begin{proposition}\label{rotation_edge_cuts}
	The~$ij$-rotation of a~$\delta$-permutree~$T$ is a~$\delta$-permutree~$T'$ whose edge cuts are precisely those of~$T$ except the edge cut defined by~$i\to j$.
\end{proposition}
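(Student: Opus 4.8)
The plan is to analyze the $ij$-edge rotation locally and show that it leaves every edge cut unchanged except the one associated with $i\to j$. First I would set up notation: let $T\in\cPT_n(\delta)$ have the edge $i\to j$, so in $T$ the vertex $v_j$ is the relevant parent/child of $v_i$ on the side dictated by $i<j$ (the right subtree of $v_i$ meets the left subtree of $v_j$, as in Figure~\ref{fig:binary_tree_rotation}). Write $S$ for the subtree that gets transferred (the ``(left) subtree of $v_j$'' in the definition) and $U$ for the subtree rooted at $v_i$ after removing $S$. I would first check that $T'$, obtained by making $S$ the new right subtree of $v_i$ and the tree rooted at $v_i$ the new left subtree of $v_j$, is again a valid $\delta$-permutree: the decoration at every vertex is unchanged (no vertex gains or loses a parent or child except via the reattachment, and one verifies case-by-case over the four possible local pictures that $v_i$ and $v_j$ keep their in/out degrees), and the order condition (item~2 in the definition of permutree) is preserved because all labels in $S$ lie strictly between $i$ and $j$ while all labels in $U$ are $\le$ or $\ge$ the appropriate bound — this is exactly what guarantees $S$ can legally become a right-descendant (or right-ancestor) block of $v_i$ and $U$ a left block of $v_j$.

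Next I would turn to edge cuts. Every edge of $T'$ other than $i\to j$ corresponds to a unique edge of $T$: edges entirely inside $S$, entirely inside $U$, entirely outside $U\cup S$, and the edge attaching $U\cup S$ to the rest of $T$ are all present in both trees, and the single edge that is genuinely altered is the one between $v_i$ and $v_j$. For an edge $e\ne (i\to j)$ I claim its edge cut $(I\,\|\,[n]\setminus I)$ is the same in $T'$ as in $T$. The key observation is that removing $e$ from either $T$ or $T'$ separates the vertex set into the same two blocks: the rotation only reshuffles how $v_i$, $v_j$, $S$, $U$ hang off one another, but the \emph{partition of vertices into the two sides of $e$} depends only on which of $S$, $U$, and the ``outside'' part each lies on, and that side-assignment is unaffected by the rotation. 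I would make this precise by a short case analysis according to whether $e$ lies in $S$, in $U$, or outside $U\cup S$, in each case exhibiting the common block structure. Finally, for the edge $i\to j$ itself one computes directly that its edge cut in $T'$ is $([n]\setminus I\,\|\,I)$ where $(I\,\|\,[n]\setminus I)$ was its cut in $T$ — i.e. the same unordered partition but with the two parts swapped, since $v_i$ and $v_j$ have exchanged roles — and in particular this cut is different from the one in $T$, so it genuinely disappears from the collection of ordered edge cuts.

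The main obstacle I anticipate is bookkeeping rather than conceptual difficulty: the definition of $ij$-rotation is stated in a ``(right)/(left)'' shorthand that covers all four decoration cases of Figure~\ref{fig:binary_tree_rotation} simultaneously, so the case analysis verifying both that $T'$ is a $\delta$-permutree and that the non-rotated edge cuts are preserved must be carried out uniformly across those four local configurations (or reduced to one representative case by a symmetry argument). I would handle this by isolating the combinatorial content — ``$S$ carries labels strictly between $i$ and $j$, $U$ carries labels on the far side of $i$, the outside carries labels on the far side of $j$'' — and noting that every claim reduces to this statement about label ranges plus the fact that reattaching subtrees does not change which connected component a vertex falls into after deleting a fixed edge disjoint from the reattachment. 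A clean way to finish is to observe that $T\mapsto T'$ is an involution (the inverse $ij$-rotation is a $ji$-type rotation on $T'$), which both confirms consistency of the construction and shows the map on edge-cut collections is the expected ``toggle one element'' operation.
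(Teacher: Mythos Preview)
The paper states this proposition without proof (it is essentially imported from~\cite{PP18}), so there is no argument in the text to compare your proposal against.

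Your overall strategy is sound and would yield a complete proof: check that the reattachment preserves both the in/out degrees and the left/right label constraints at $v_i$ and $v_j$ (so $T'\in\cPT_n(\delta)$), and then argue that deleting any edge $e\neq(i\to j)$ yields the same bipartition of $[n]$ in $T$ and in $T'$, since the rotation only rearranges how the blocks $S$, $U$, and the ``outside'' are attached to one another, not which block each vertex sits in. Your case split according to the location of $e$ is the natural way to organize this, and the involution remark is a clean sanity check.

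One point needs correcting. You claim that in $T'$ the edge cut of the rotated edge is the reversal $([n]\setminus I\,\|\,I)$ of the original cut $(I\,\|\,[n]\setminus I)$. That is not what happens. In $T$ the edge is $i\to j$ and the $v_i$-side $I$ contains the transferred subtree $S$ (since $S$ hangs off $v_i$ there); in $T'$ the edge is $j\to i$ and the $v_j$-side $I'$ also contains $S$ (since $S$ now hangs off $v_j$). Hence $I'\neq[n]\setminus I$: the middle block $S$ has switched sides rather than the two parts being swapped. Your conclusion that the cut genuinely changes is still correct, but the mechanism is the migration of $S$, not a reversal of the ordered pair.
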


The resulting poset is called the rotation poset of~$\delta$-permutrees and its covering relations are characterized by edge rotations. Figure~\ref{fig:permutree_rotations} shows rotations between all possible adjacent vertices and Figure~\ref{fig:permutree-inversion-vector-IXYI} presents an example of such a rotation poset where~$\delta=\nonee\uppdownn\upp\nonee$.

\begin{figure}[h]
	\centering
	\includegraphics[scale=0.905]{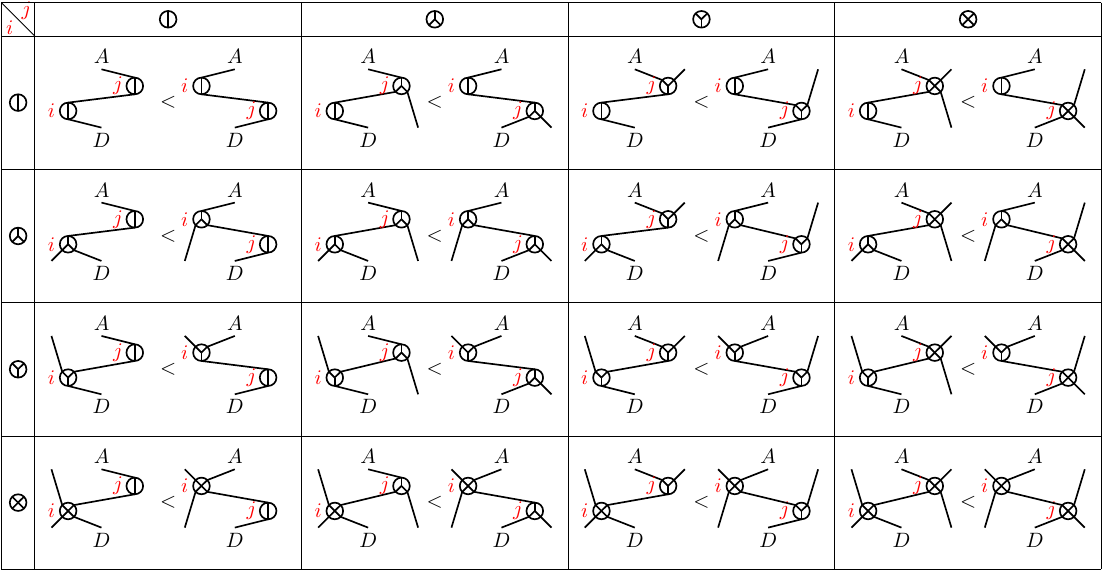}
	\caption{ All possible~$ij$-rotations of~$\delta$-permutrees. Figure based from~\cite{PP18}.}\label{fig:permutree_rotations}
\end{figure}

\begin{remark}\label{rem:permutree_rotations_bounded}
	Notice that~$\delta$-permutree posets are always bounded. The minimal element~$\hat{0}_\delta$ (resp.\ maximal element~$\hat{1}_\delta$) is the~$\delta$-permutree such that~$i\to i+1$ (resp.~$i+1\to i$) for all~$i\in[n-1]$.
\end{remark}

As for binary trees, the rotation poset of permutrees is a lattice.

\begin{proposition}[{\cite[Prop.2.32]{PP18}}]\label{prop:permutree_lattice_property}
	The poset of~$\delta$-permutrees~$(\cPT_n(\delta),\leq)$ is a lattice.

	Moreover, the~$\delta$-permutree lattice is isomorphic to \begin{itemize}
		\itemsep0em
		\item the weak order of~$\fS_n$ if~$\delta=\nonee^n$,
		\item the Tamari lattice if~$\delta=\downn^n$,
		\item the (Type~$A$) Cambrian lattices if~$\delta\in\{\downn,\upp\}^n$,
		\item the boolean lattice if~$\delta=\uppdownn^n$.
	\end{itemize}
\end{proposition}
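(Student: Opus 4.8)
The plan is to adapt the Huang--Tamari argument recalled in Section~\ref{ssec:binary_trees}: replace bracket sets by an appropriate notion of \emph{inversion set} attached to a $\delta$-permutree, characterize which sets so arise (in the spirit of Proposition~\ref{prop:bracket_vector_characterization}), build a meet by intersecting these sets, and finally read off the four specializations. Concretely, I would encode a permutree $T\in\cPT_n(\delta)$ by recording, for each ordered pair $i<j$, the data of the edge cut (Definition~\ref{def:permutree_rotations}) separating $v_i$ from $v_j$ --- informally, whether $v_j$ sits below $v_i$ on its left or on its right, refined according to $\delta_i$ and $\delta_j$. Grouping this information by its first coordinate gives \emph{components} $S_i$, and one expects that a family $(S_i)_i$ is realized by some $\delta$-permutree precisely when each $S_i$ is an interval whose orientation is forced by $\delta_i$ and the family is transitively closed in the sense that $j\in S_i\Rightarrow S_j\subseteq S_i$, with a mirrored condition governing the second side whenever $\delta_i$ or $\delta_j$ involves a two-sided split. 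Since Proposition~\ref{rotation_edge_cuts} tells us that an $ij$-rotation alters exactly one edge cut, covering steps correspond to single elementary changes of the inversion set; iterating from $\hat{0}_\delta$, whose inversion set is empty, yields the order description $T\leq T'\iff$ the inversion set of $T$ is contained in that of $T'$. This first block, where the decoration really enters, is where most of the combinatorial bookkeeping lives.

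The central step is then the meet. Given $T,T'\in\cPT_n(\delta)$, I would form the componentwise intersection $(S_i\cap S_i')_i$ and prove that it still satisfies the characterization; the order description then immediately identifies the corresponding $\delta$-permutree as $T\wedge T'$. The interval condition survives trivially, so the real content is transitive closure: from $j\in S_i\cap S_i'$ we get $S_j\subseteq S_i$ and $S_j'\subseteq S_i'$, and we must upgrade this to $S_j\cap S_j'\subseteq S_i\cap S_i'$, which --- exactly as in the binary-tree case --- is not a formal consequence of the two inclusions and must be pushed through using the interval (``no gaps'') structure. The part I expect to be the genuine obstacle, beyond what~\cite{HT72} already handles, is the presence of two-parent vertices (entries $\upp$ and $\uppdownn$ of $\delta$) and, for $\uppdownn$, the simultaneous presence of ancestor-type and descendant-type constraints, which forces the closure argument to be run on several sides at once.

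Granting the meet, the lattice property follows exactly as for Corollary~\ref{thm:tamari_lattice_proof}: $(\cPT_n(\delta),\leq)$ is bounded (Remark~\ref{rem:permutree_rotations_bounded}), and a finite bounded meet-semilattice is a lattice, the join of $a$ and $b$ being the meet of their set of common upper bounds, which is nonempty because it contains $\hat{1}_\delta$. For the ``moreover'' clause I would unwind Example~\ref{ex:permutree_examples}: for $\delta=\nonee^n$ the set $\cPT_n(\nonee^n)$ is in bijection with $\fS_n$ and an $ij$-rotation becomes a weak-order covering relation, so the rotation order is the weak order; for $\delta=\downn^n$, Definition~\ref{def:permutree_rotations} reduces to the rotation of Figure~\ref{fig:binary_tree_rotation} and the poset is $\cBT_n$; for $\delta\in\{\downn,\upp\}^n$ one recovers the Cambrian trees of~\cite{CP17} together with their rotation lattices from~\cite{R06}; and for $\delta=\uppdownn^n$ the binary-word encoding turns an edge rotation into flipping a single bit, giving the Boolean lattice $2^{[n-1]}$.

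As an alternative route, one could follow~\cite{PP18} directly and realize the insertion map $\fS_n\to\cPT_n(\delta)$ as a surjection whose fibers are the classes of a lattice congruence of the weak order, then invoke Reading's theorem~\cite{R06} that a quotient of a finite lattice by a lattice congruence is again a lattice; in that approach the obstacle is relocated to verifying the congruence property via the local arc-forcing criterion.
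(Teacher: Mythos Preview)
Your overall strategy coincides with the paper's: define an inversion set for each $\delta$-permutree, characterize the admissible sets, show the rotation order is inclusion of these sets, build a meet, and use boundedness. The ``moreover'' clause and the alternative congruence route are also fine.

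The genuine gap is in the middle two steps. First, the characterization you propose (components are intervals, plus a nesting condition $j\in S_i\Rightarrow S_j\subseteq S_i$) is the binary-tree one and is \emph{false} for general $\delta$: already for $\delta=\nonee^n$ the inversion components are arbitrary inversion sets of permutations and need not be intervals. The correct replacement (Lemma~\ref{lem:permutree_inversion_sets} in the paper) is that the set $E\subseteq\{(i,j):i<j\}$ must be transitive, \emph{cotransitive} (its complement is transitive), and satisfy two local $\delta$-dependent conditions at vertices with $\delta_j\in\{\downn,\uppdownn\}$ or $\delta_j\in\{\upp,\uppdownn\}$.

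Second, and more importantly, the meet is \emph{not} the componentwise intersection. The nesting inclusion $S_j\cap S_j'\subseteq S_i\cap S_i'$ that you flag as ``the real content'' is in fact a formal consequence of $S_j\subseteq S_i$ and $S_j'\subseteq S_i'$; what actually fails is cotransitivity of $B(T)\cap B(T')$. Concretely, with $\delta=\upp\uppdownn\nonee\downn\downn$ and the two permutrees of Example~\ref{ex:permutree_meet} one has $(2,5)\in B(T)\cap B(T')$ while $(2,3)\notin B(T)\cap B(T')$ and $(3,5)\notin B(T)\cap B(T')$, so the bare intersection is not cotransitive and hence not an inversion set. The paper's fix (Theorem~\ref{thm:permutree_meet}) is to further intersect with
\[
\bigl\{(i,j)\,:\,\forall\,i<l<j,\ (i,l)\in B(T)\cap B(T')\ \text{or}\ (l,j)\in B(T)\cap B(T')\bigr\},
\]
which is precisely the step that restores cotransitivity; one then has to check that this smaller set still satisfies the two $\delta$-conditions, which is where the actual work lies. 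With these two corrections your outline becomes exactly the paper's proof.
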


The proof of~\cite{PP18} of the lattice property uses the theory of lattice quotients. In Section~\ref{sec:inversion_vectors} we give constructive proof of this fact using similar ideas as Proposition~\ref{prop:tamari_lattice_meet}.

Like the Tamari lattice, the~$\delta$-permutree rotation lattice appears as the~$1$-skeleton of the~\defn{$\delta$-permutreehedron}~$\PPT(\delta)$ constructed equivalently as \begin{itemize}
  \itemsep0em
  \item the convex hull of points of the form \begin{equation*}
          \mathbf{a}(T)_i =
          \begin{cases}
            1+d                           & \text{ if } \delta_i=\nonee,   \\
            1+d+|LD_i||RD_i|              & \text{ if } \delta_i=\downn,   \\
            1+d-|LA_i||RA_i|              & \text{ if } \delta_i=\upp,     \\
            1+d+|LD_i||RD_i|-|LA_i||RA_i| & \text{ if }\delta_i=\uppdownn, \\
          \end{cases}
        \end{equation*} where~$d$ is the number of descendants of~$v_i$, and~$T$ is a~$\delta$-permutree,
  \item the intersection of the following hyperplane and half-spaces \begin{equation*}
          \left\{\mathbf{x}\in\RR^n \,:\,\sum_{i\in [n]}x_i=\gbinom{n+1}{2}\right\} \cap \bigcap_{I\in\mathcal{I}} \left\{\mathbf{x}\in\RR^n \,:\, \sum_{i\in I}x_i\geq\gbinom{|I|+1}{2}\right\},
        \end{equation*} where~$\mathcal{I}=\{I\subsetneq[n]\,:\,\exists\text{ a~$\delta$-permutree with edge cut }(I\,\|\,[n]\setminus I)\}$.
\end{itemize} 

See~\cite{PP18} for more details of this geometric construction. Figure~\ref{fig:PPT4} contains examples of~$\delta$-permutreehedra for some decorations.

\begin{figure}[h!]
	\centering
	\includegraphics[scale=1.2]{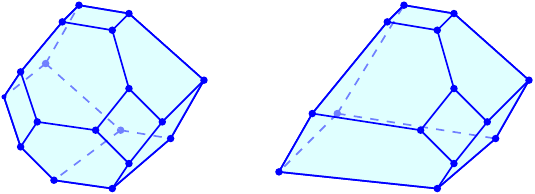}
	\caption{ The permutreehedra~$\PPT(\nonee\upp\nonee\nonee)$ (left) and~$\PPT(\nonee\upp\downn\nonee)$ (right).}
	\label{fig:PPT4}
\end{figure}

\section{Inversion Vectors}\label{sec:inversion_vectors}

We begin defining inversion vectors for~$\delta$-permutrees with the aim of proving the lattice property of~$\delta$-permutrees rotation posets (Proposition~\ref{prop:permutree_lattice_property}) in a constructive manner.

Recall that~$j\to i$ if~$v_j$ is a descendant of~$v_i$.

\begin{definition}\label{def:inversion_vector_permutrees}
	Consider~$T\in\cPT_n(\delta)$ to be a~$\delta$-permutree. Its \defn{inversion set} and \defn{inversion components} are \begin{equation*}
		\begin{split}
			B(T)&:=\{(i,j)\,:\, i<j  \text{ and } j\to i \},\\
			{B(T)}_i&:=\{j\in [n] \,:\, (i,j)\in B(T)\}.
		\end{split}
	\end{equation*} That is, all $j>i$ such that~$v_j$ is a descendant of~$v_i$. An inversion set has an associated \defn{inversion vector}~$\vec{b}(T)=(b_1,\ldots,b_{n-1})$ such that~$b_i=|{B(T)}_i|$.
\end{definition}

Since~$RD_{n}=\emptyset$, its component does not alter the combinatorics of inversion sets and thus, we do not consider it. Figure~\ref{fig:permutree-inversion-vector-IXYI} contains the inversion vectors for all~$\nonee\uppdownn\upp\nonee$-permutrees.

\begin{figure}[h!]
	\centering
	\includegraphics[scale=0.6]{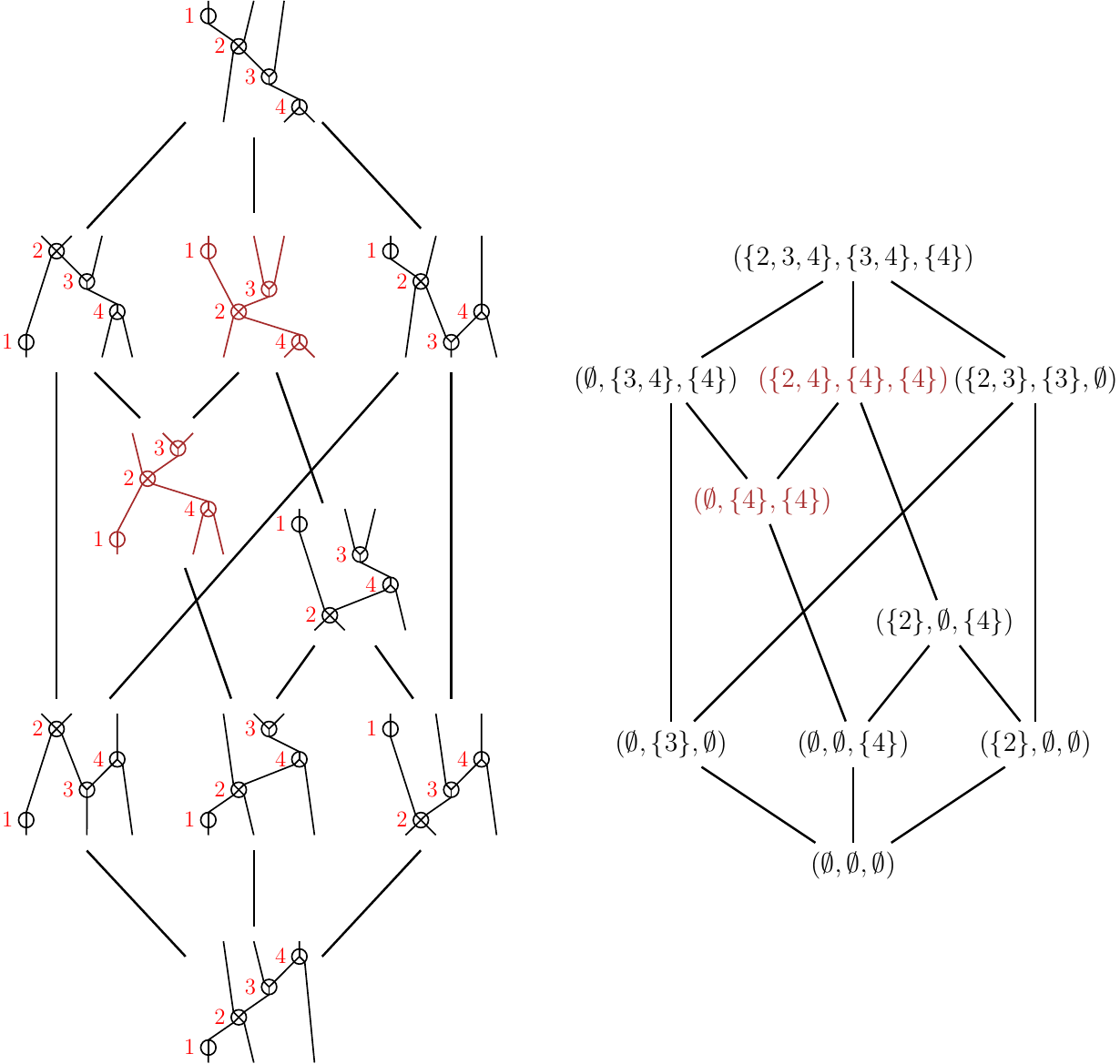}
	\caption{ The rotation lattice of~$\nonee\uppdownn\upp\downn$-permutrees with their respective inversion sets represented via their components. The permutrees in brown are those that are not extremal.}\label{fig:permutree-inversion-vector-IXYI}
\end{figure}

\begin{example}
	Let~$\hat{1},\,\hat{0},\, T_l,$ and~$T_r$ respectively be the top, bottom, middle left, and middle right elements of the lattice of~$\nonee\uppdownn\upp\nonee$-permutrees as in Figure~\ref{fig:permutree-inversion-vector-IXYI}. Then \begin{equation*}
		\begin{split}
			B(\hat{1})&=\{(1,2),(1,3),(1,4),(2,3),(2,4),(3,4)\},\\
			B(\hat{0})&=\emptyset,\\
			B(T_l)&=\{(2,4),(3,4)\},\\
			B(T_r)&=\{(1,2),(3,4)\}.
		\end{split}
	\end{equation*}
\end{example}

\begin{lemma}\label{lem:permutree_inversion_sets}
	Let~$1\leq i<j<k\leq n$. The set of inversion sets~$\{B(T)\,:\, T\in\cPT_n(\delta)\}$ is the set of all subsets~$E\subseteq \{(i,j)\in[n]^2\,:\,1\leq i<j\leq n\}$ such that \begin{enumerate}
		\itemsep0em
		\item $E$ is transitive,
		\item $E$ is cotransitive (i.e.\ the complement is transitive),
		\item if~$\delta_j\in\{\downn,\uppdownn\}$,~$(i,j)\notin E$, and~$(j,k)\in E$, then~$(i,k)\notin E$,
		\item if~$\delta_j\in\{\upp,\uppdownn\}$,~$(i,j)\in E$, and~$(j,k)\notin E$, then~$(i,k)\notin E$.
	\end{enumerate}
\end{lemma}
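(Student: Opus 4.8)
The statement asks for a combinatorial characterization of which relations on $[n]$ arise as inversion sets of $\delta$-permutrees, so the plan is to prove two inclusions: every $B(T)$ satisfies conditions (1)--(4), and conversely every $E$ satisfying (1)--(4) equals $B(T)$ for a unique $\delta$-permutree $T$. First I would establish the easy direction. Given $T\in\cPT_n(\delta)$, the relation $j\to i$ defines a partial order on $[n]$ (as stated in the definition of permutrees), and $B(T)$ records exactly the pairs $(i,j)$ with $i<j$ and $j\to i$; transitivity of $\to$ gives condition (1) immediately, and the fact that the partial order is actually induced by a linear-extension-compatible tree structure — more precisely, that for $i<j<k$ one cannot have $k\to i$ without $k\to j$ or $j\to i$ — gives cotransitivity (2). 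This ``planarity'' fact should follow from the insertion/tree structure: the undirected path from $v_k$ up to $v_i$ must pass through vertices whose labels interleave appropriately. Conditions (3) and (4) are the genuinely new content: if $\delta_j$ allows two children ($\downn$ or $\uppdownn$), then the descendants of $v_j$ split into $LD_j$ (labels $<j$) and $RD_j$ (labels $>j$), so a vertex $v_k$ with $k>j$ that is a descendant of $v_j$ lies in $RD_j$, and if additionally $v_j$ is not a descendant of $v_i$ (so $v_i\notin$ the subtree rooted appropriately), then the path from $v_k$ to $v_i$ through $v_j$ is blocked — I would argue $v_k\in RD_j$ and $v_i$ being ``to the left'' forces $(i,k)\notin E$. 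Condition (4) is the dual statement using $LA_j, RA_j$ and the two-parent structure.

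For the converse, given $E$ satisfying (1)--(4), I would reconstruct $T$ directly and verify $B(T)=E$. The cleanest route is probably to build the tree recursively on the vertex set: conditions (1) and (2) alone characterize inversion sets of \emph{permutations} (this is the classical fact that a subset of inversions of the identity is an inversion set of a permutation iff it is transitive and cotransitive, i.e.\ ``biconvex''), so $E$ together with its complement determines a linear order, hence a permutree with decoration $\nonee^n$; then conditions (3) and (4) are exactly what is needed to guarantee that the permutree-insertion map (the surjection from permutations to $\delta$-permutrees of \cite{PP18}) is compatible, i.e.\ that $E$ is constant on the fiber and descends to a well-defined $\delta$-permutree. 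Alternatively, and perhaps more self-containedly, I would do a direct inductive construction: locate the vertex that must be the ``global source/root'' region using minimality in the order defined by $E$ and its complement, peel it off, check that the restriction of $E$ to the remaining vertices still satisfies (1)--(4) with the restricted decoration, and reassemble, using (3)--(4) to decide whether each vertex has one or two children/parents consistently with $\delta$. Uniqueness follows because the map $T\mapsto B(T)$ is injective: the inversion set determines all edge cuts (an edge cut $(I\,\|\,[n]\setminus I)$ corresponds to $I$ being down-closed in the order), and by Proposition~\ref{rotation_edge_cuts} the edge cuts determine the permutree.

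\textbf{Main obstacle.} The delicate point is the converse direction — showing that conditions (3) and (4) are not merely necessary but \emph{sufficient} to cut down the transitive-cotransitive relations (which correspond to permutations) exactly to those compatible with the $\delta$-decoration. I expect the hard part is verifying that the recursive peeling is well-defined: one must check that after removing a vertex, the induced relation still satisfies all four conditions, and in particular that conditions (3)/(4) at the removed vertex's neighbors are preserved. This amounts to a careful case analysis on the decoration $\delta_j\in\{\nonee,\downn,\upp,\uppdownn\}$ and the relative position of the triple $i<j<k$, essentially re-deriving the local picture of Figure~\ref{fig:permutree_rotations}. A secondary subtlety is making precise the ``leftness/rightness'' argument in the easy direction — translating the planar/directed structure of the tree into the purely order-theoretic statements (3)--(4) — which I would handle by invoking the second axiom in the definition of permutree (labels in left vs.\ right subtrees) together with the edge-cut description.
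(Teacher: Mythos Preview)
Your proposal is correct and your first route for the converse---build the unique permutation with inversion set $E$ from conditions (1)--(2), then apply the insertion algorithm of \cite{PP18} and use (3)--(4) to verify that the resulting $\delta$-permutree has inversion set exactly $E$---is precisely what the paper does (it phrases the first step as an explicit placement of the vertices on an $n\times n$ grid to obtain a permutation table). One small clarification: the check in the converse is not that ``$E$ is constant on the fiber'' but rather that $B(T(E))=E$, i.e.\ that the insertion algorithm neither creates nor destroys inversions relative to $E$; this is exactly where (3)--(4) enter, and the paper is equally terse here, attributing it to the ``red walls'' of the insertion algorithm.
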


\begin{proof}
	Let~$T$ be a~$\delta$-permutree and~$E:=B(T)$. If~$(i,j),(j,k)\in E$, then~$j\rightarrow i,\,k\rightarrow j$, evidently~$k\rightarrow i$. That is,~$(i,k)\in E$ and~$E$ is transitive. The fact that~$E$ is cotransitive follows a similar argument. For property 3, the facts that~$(i,j)\notin E$ and~$(j,k)\in E$ respectively mean that~$v_i\in LD_j$ and~$v_k\in RD_j$. Thus,~$v_k$ is not a child of~$v_i$ and~$(i,k)\notin E$. Property 4 follows a similar argument.

	For the opposite direction we wish to construct a~$\delta$-permutree~$T(E)$ in accordance with the elements in~$E$. Let~$E_i=\{j\in [n]\,:\, (i,j)\in E\}$ be the components of~$E$. Notice that~$E_n=\emptyset$ and that if~$(i,j)\in E_i$, and~$(j,k)\in E_j$ then~$(i,k)\in E_i$ due to~$E$ being transitive. With this in mind, we can construct~$T(E)$ in the following way. Take an~$n\times n$ grid. As step~$0$, place vertex~$v_n$ anywhere in the last column. Now for step~$i$, place the vertex~$v_{n-i}$ in the~$n-i$-th column and at the height such that it is above (resp.\ below) all~$j$ such that~$(n-i,j)\in E$ (resp.~$(n-i,j)\notin E$). If such height was already used by another vertex, move the placed vertices up or down as required maintaining the relative others established in the previous steps. After step~$n-1$ we get a permutation table. Decorate each vertex~$v_i$ with the decoration~$\delta_i$. Following the insertion algorithm of~\cite{PP18} we obtain a~$\delta$-permutree~$T(E)$. Notice that the placement of the vertices in the grid ensures transitivity and cotransitvity and that the red walls from the decorations in the insertion algorithm accomplish properties 3 and 4. See Figure~\ref{fig:permutree_inversion_set_insertion_algorithm} for an example.
\end{proof}

\begin{figure}[h!]
	\centering
	\includegraphics[scale=0.7]{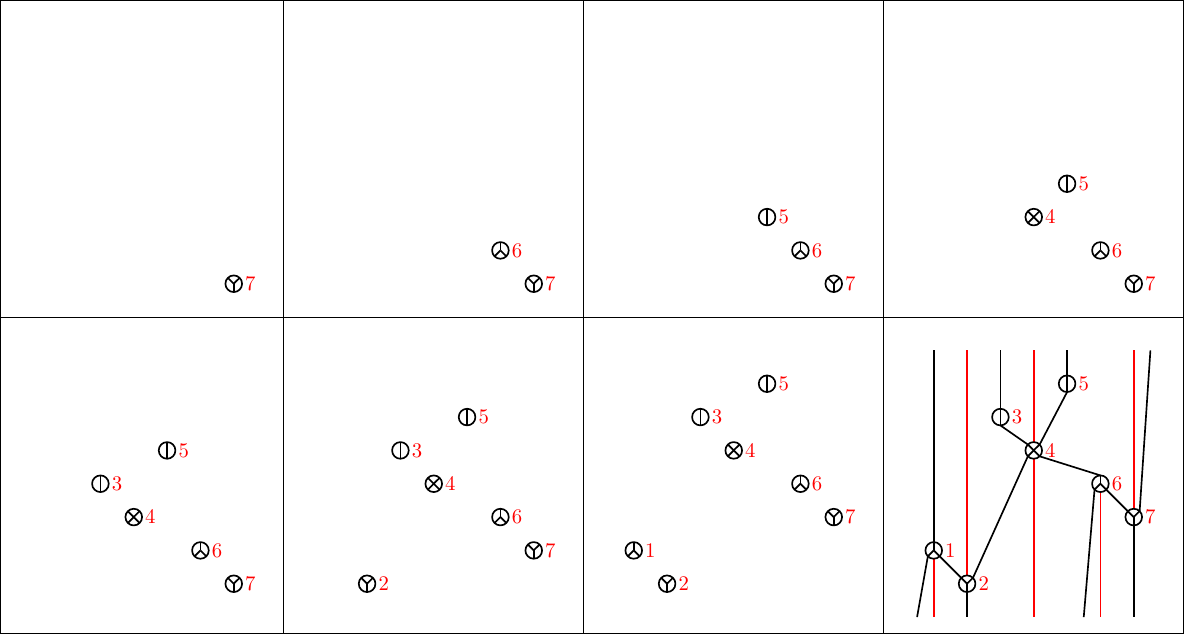}
	\caption{ The construction of the~$\downn\upp\nonee\uppdownn\nonee\downn\upp$-permutree corresponding to the inversion set~$\{(1,2),(3,4),(3,6),(3,7),(4,6),(4,7),(5,6),(5,7),(6,7)\}$.}\label{fig:permutree_inversion_set_insertion_algorithm}
\end{figure}

\begin{remark}\label{rem:inversion_set_inclusion}
	In the case of cover relations~$T\lessdot T'$ between~$\delta$-permutrees, that is, an~$ij$-edge rotation from~$T$ to~$T'$, Definition~\ref{def:permutree_rotations} tells us that such a rotation only affects the edge between~$v_i$ and~$v_j$ while the rest of the tree remains the same. In terms of inversions this means that the rotation only turns into inversions the pairs of the form~$(i,x)$ where~$x\in RD_j(T)$ and if~$\delta_i\in\{\upp,\uppdownn\}$, also all the pairs that depend on these in a transitive manner. That is,~$B(T')=(B(T)\cup \{(i,j)\})^{tc}$ no matter the decoration~$\delta$.
\end{remark}

\begin{remark}\label{rem:recovering_CPP_1}
	The characterization of inversion sets was already given in~\cite[Section 2.3.2]{CPP19} where they are called \emph{IPIP} (\emph{PIP} meaning permutree interval poset). With Lemma~\ref{lem:permutree_inversion_sets}, not only we have characterized inversion sets for permutrees but also described how to recover the permutree given its inversion set which is not done in~\cite{CPP19}.
\end{remark}

To use inversion sets as a tool we need first to show that we can describe the~$\delta$-permutree rotation order via their containment.

\begin{lemma}\label{lem:inversion_set_inclusion}
	Let~$T,T'$ be two~$\delta$-permutrees. Then~$T<T'$ if and only if~$B(T)\subset B(T')$.
\end{lemma}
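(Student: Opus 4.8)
The plan is to prove both implications using covering relations as the bridge between the order and inclusion of inversion sets. For the forward direction, suppose $T < T'$. Then there is a saturated chain $T = T_0 \lessdot T_1 \lessdot \cdots \lessdot T_m = T'$ of $ij$-edge rotations. By Remark~\ref{rem:inversion_set_inclusion}, each step satisfies $B(T_{k+1}) = (B(T_k) \cup \{(i_k, j_k)\})^{tc}$, so in particular $B(T_k) \subseteq B(T_{k+1})$ for every $k$; chaining these gives $B(T) \subseteq B(T')$. To upgrade this to a strict inclusion it suffices to check that a single rotation strictly enlarges the inversion set, which is clear since the rotated edge $i \to j$ contributes the pair $(i,j) \notin B(T_k)$ (the edge cut defined by $i\to j$ is not shared by $T_{k+1}$, cf.\ Proposition~\ref{rotation_edge_cuts}). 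Hence $B(T) \subsetneq B(T')$.

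For the converse, the cleanest route is an induction on $|B(T')| - |B(T)|$ (a nonnegative integer by hypothesis). If this difference is $0$ then $B(T) = B(T')$; since the inversion set determines the permutree (the construction $T(E)$ in the proof of Lemma~\ref{lem:permutree_inversion_sets} is inverse to $T \mapsto B(T)$), we get $T = T'$, contradicting $B(T) \subsetneq B(T')$, so this base case is vacuous and we may assume the difference is positive. The key step is then: given $B(T) \subsetneq B(T')$ with $T \ne T'$, find a covering relation $T \lessdot T''$ with $B(T) \subsetneq B(T'') \subseteq B(T')$. Granting this, $T'' \le T'$ by induction (the gap $|B(T')| - |B(T'')|$ has strictly decreased), hence $T < T'$.

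So the heart of the argument is producing that rotation. I would look for an edge $i \to j$ of $T$ such that rotating it moves $T$ closer to $T'$, i.e.\ such that $(B(T) \cup \{(i,j)\})^{tc} \subseteq B(T')$. A natural candidate: since $B(T) \subsetneq B(T')$, pick a pair $(a,c) \in B(T') \setminus B(T)$ with $c - a$ minimal; I claim $a \to c$ can be "traced" down in $T$ to an actual edge $i \to j$ of $T$ with $a \le i < j \le c$ and $(i,j) \in B(T') \setminus B(T)$, and that because $(a,c)$ was chosen with minimal span, the transitive-cotransitive closure of $B(T) \cup \{(i,j)\}$ introduces only pairs that are forced to lie in $B(T')$ (using that $B(T')$ itself satisfies the closure conditions of Lemma~\ref{lem:permutree_inversion_sets}). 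One must also confirm $i \to j$ is a rotatable edge, i.e.\ the rotation stays within $\cPT_n(\delta)$ — but Proposition~\ref{rotation_edge_cuts} guarantees any edge rotation of a $\delta$-permutree is again a $\delta$-permutree, so the only real task is to exhibit the edge and verify the closure stays inside $B(T')$.

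The main obstacle I anticipate is precisely this last verification: showing that $(B(T) \cup \{(i,j)\})^{tc} \subseteq B(T')$, i.e.\ that taking the transitive–cotransitive closure does not overshoot $B(T')$. This requires carefully combining the four conditions of Lemma~\ref{lem:permutree_inversion_sets} for $B(T')$ with the minimality of the chosen pair, and handling the decoration-dependent conditions (3) and (4) — the cases $\delta_i \in \{\upp,\uppdownn\}$, where the rotation drags along extra transitively-forced inversions, are where the bookkeeping is most delicate. An alternative that sidesteps some of this is to argue entirely at the level of inversion sets: show that whenever $E \subsetneq E'$ are both valid inversion sets (satisfying the Lemma~\ref{lem:permutree_inversion_sets} conditions), there is a valid $E''$ with $E \lessdot E'' \subseteq E'$ in the "add one pair and close up" sense, and only afterwards translate back to permutrees via Lemma~\ref{lem:permutree_inversion_sets} and Remark~\ref{rem:inversion_set_inclusion}; this is essentially the same work but phrased combinatorially rather than geometrically.
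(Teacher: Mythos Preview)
Your forward direction coincides with the paper's. For the converse the paper takes a different route: instead of inducting on $|B(T')|-|B(T)|$ and searching for a rotation of $T$ toward $T'$, it fixes a saturated chain $\hat 0=T_0\lessdot T_1\lessdot\cdots\lessdot T_{l+1}=T$ (a ``generating sequence'' for $T$) and argues by induction on the position $x$ along this chain that each intermediate $T_x$ satisfies $T_x\le T'$; the last case gives $T\le T'$. The inductive step invokes $B(T_x)\subseteq B(T)\subset B(T')$ (from $T_x\le T$ and the hypothesis) together with $T_{x-1}\le T'$. So the paper climbs a fixed chain from $\hat 0$ to $T$ and checks it never leaves the down-set of $T'$, whereas you climb from $T$ toward $T'$ one rotation at a time. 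Your route is the more standard ``cover toward the target'' argument familiar from the weak order.

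The step you flag as the obstacle is indeed where all the content lives, and your sketch does not yet complete it. Your minimal-span heuristic is the right starting point: if $(a,c)\in B(T')\setminus B(T)$ has $c-a$ minimal then, for every $a<l<c$, minimality forces each of $(a,l)$ and $(l,c)$ to lie either in $B(T)$ or outside $B(T')$, while cotransitivity of $B(T')$ forbids both from being outside $B(T')$ and transitivity of $B(T)$ forbids both from being inside $B(T)$. This dichotomy is exactly what lets you locate an edge of $T$ whose rotation introduces $(a,c)$; what remains is to check that the transitive closure---including the decoration-dependent extra pairs forced by conditions (3) and (4) of Lemma~\ref{lem:permutree_inversion_sets} when $\delta_a\in\{\upp,\uppdownn\}$---stays inside $B(T')$, and that verification is real work you have not written out. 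It is worth noting that the paper's own inductive step is terse at the analogous point (deducing $T_x\le T'$ from $B(T_x)\subseteq B(T')$ and $T_{x-1}\le T'$ is essentially the statement being proved applied to the pair $T_x,T'$), so a fully detailed version of your argument would in fact be tighter than what appears in the paper.
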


\begin{proof}
	Suppose that~$T<T'$. Since the rotation order on permutrees is the transitive closure of the covering relations given by the rotations in Figure~\ref{fig:permutree_rotations}, it is enough to prove this in the case that~$T'$ covers~$T$. Remark~\ref{rem:inversion_set_inclusion} tells us that in such a case~$B(T')=(B(T)\cup \{(i,j)\})^{tc}$ and thus~$B(T)\subset B(T')$.

	Before moving to the other direction let~$\hat{0}$ be the minimal~$\delta$-permutree in the rotation lattice. The fact that~$T\lessdot T'$ implies that~$B(T)={(B(T)\cup\{(i,j)\})}^{tc}$ tells us that for any chain~$\hat{0}=T_0\lessdot T_1\lessdot\cdots\lessdot T_{l}\lessdot T_{l+1}=T$ in the interval~$[\hat{0},T]$, we have that a \defn{sequence} of inversions~$(i_x,j_x)$ such that~$B(T_x)={(B(T_{x-1})\cup\{(i_x,j_x)\})}^{tc}$ for all~$x\in[l]$. We say that such a sequence~$(i_1,j_1),\ldots,(i_l,j_l)$ \defn{generates}~$T$. It is easy to see that~$T<T'$ if and only if for every sequence~$(i_1,j_1),\ldots,(i_l,j_l)$ that generates~$T$ there exists a sequence of the form~$(i_1,j_1),\ldots,(i_l,j_l),(i_{l+1},j_{l+1}),\ldots,(i_{l'},j_{l'})$ that generates~$T'$. Take notice that the length of the chains in an interval of permutrees might not always be the same.

	Now suppose that~$B(T)\subset B(T')$ and let~$(i_1,j_1),\ldots,(i_l,j_l)$ be a sequence of~$T$. We claim that for all~$x\in\{0,\ldots,l\}$ the sequence~$(i_1,j_1),\ldots,(i_x,j_x)$ is the start of sequence of~$T'$. Let~$T_x$ correspond to the~$\delta$-permutree corresponding to the claim corresponding to~$x$. Notice that the claim for~$x=l$ amounts to proving~$T<T'$. We proceed by induction on the length of the chain which is given by~$x$. If~$x=0$ then~$T_0=\hat{0}$ and the claim is trivial. Now suppose that the claim holds for~$x-1$ and~$(i_1,j_1),\ldots,(i_{x-1},j_{x-1})$ is the start of a sequence of~$T'$, that is,~$T_{x-1}<T$. Suppose as well that the~$T_{x}\nless T'$. Since~$T_x< T'$, this means that with the~$i_xj_j$-edge rotation,~$T_{x}$ obtained an inversion that~$T'$ does not possess. This is a contradiction since~$B(T_{x})={(B(T_{x-1})\cup\{(i_x,j_x)\})}^{tc}\subset B(T)\subset B(T)$ as all sets~$B$ are transitive. Thus,~$T_x<T$ for all~$x\in\{0,\ldots,l\}$ and~$T<T'$.
\end{proof}

\begin{theorem}\label{thm:permutree_meet}
	Given two~$\delta$-permutrees~$T,T'$ on~$n$ vertices, there exists a~$\delta$-permutree~$T\wedge T'$ under the~$\delta$-permutree rotation order. Moreover, it satisfies \begin{equation}\label{eq:inversion_set_meet}
		B(T\wedge T')=B(T)\cap B(T')\cap \{(i,j)\,:\,\forall\,i<l<j,\, (i,l) \text{ or } (l,j)\in B(T)\cap B(T')\}.
	\end{equation}
\end{theorem}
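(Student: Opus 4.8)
The plan is to show that the right-hand side $M$ of~\eqref{eq:inversion_set_meet} is itself a valid inversion set (so that $T(M)$ is a well-defined $\delta$-permutree by Lemma~\ref{lem:permutree_inversion_sets}), then argue that $T(M)$ is the greatest lower bound of $T$ and $T'$ in the rotation order using the order characterization of Lemma~\ref{lem:inversion_set_inclusion}. First I would verify the four defining conditions of Lemma~\ref{lem:permutree_inversion_sets} for $M$. Transitivity: if $(i,j),(j,k)\in M$ then both pairs lie in $B(T)\cap B(T')$, which is transitive, so $(i,k)\in B(T)\cap B(T')$; the extra ``intermediate'' condition for $(i,k)$ needs that for every $i<l<k$ we have $(i,l)$ or $(l,k)$ in $B(T)\cap B(T')$, and this is where the shape of the intermediate set must be exploited — I would split on whether $l<j$, $l=j$, or $l>j$ and chase transitivity together with the fact that $(i,j)$ and $(j,k)$ each already satisfy the intermediate condition. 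Cotransitivity and conditions 3--4 for $M$ should follow more directly: conditions 3 and 4 are ``absence'' conditions, so since $M\subseteq B(T)\cap B(T')\subseteq B(T)$ and $B(T)$ satisfies them, and the hypotheses $(i,j)\notin M$, $(j,k)\in M$ (resp.\ $(i,j)\in M$, $(j,k)\notin M$) can be transferred appropriately to $B(T)$, the conclusion $(i,k)\notin M$ follows. Cotransitivity I would handle by contradiction, assuming $(i,k)\in M$ but $(i,j),(j,k)\notin M$, and deriving a violation.

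Once $M$ is known to be an inversion set, write $P:=T(M)$. I claim $P = T\wedge T'$. Since $M\subseteq B(T)$ and $M\subseteq B(T')$, Lemma~\ref{lem:inversion_set_inclusion} gives $P\le T$ and $P\le T'$, so $P$ is a lower bound. For maximality, let $S$ be any $\delta$-permutree with $S\le T$ and $S\le T'$; then $B(S)\subseteq B(T)\cap B(T')$. It remains to show $B(S)\subseteq M$, i.e.\ that $B(S)$ also satisfies the intermediate condition defining $M$: for $(i,j)\in B(S)$ and any $i<l<j$, we need $(i,l)\in B(T)\cap B(T')$ or $(l,j)\in B(T)\cap B(T')$. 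But $B(S)$ is itself transitive and cotransitive, so $(i,l)\in B(S)$ or $(l,j)\in B(S)$ (cotransitivity applied to the pair $(i,l),(l,j)$ given $(i,j)\in B(S)$), and either way that pair lies in $B(S)\subseteq B(T)\cap B(T')$. Hence $B(S)\subseteq M = B(P)$, so $S\le P$ by Lemma~\ref{lem:inversion_set_inclusion}, establishing that $P$ is the meet.

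The main obstacle I expect is the verification of transitivity for $M$ — specifically checking the intermediate condition for the inferred pair $(i,k)$ when $(i,j),(j,k)\in M$. The danger is an intermediate index $l$ strictly between $i$ and $k$ but on the ``wrong side'' of $j$: say $j<l<k$. Then one knows $(j,l)$ or $(l,k)$ lies in $B(T)\cap B(T')$ from $(j,k)\in M$; in the first case one combines $(i,j)\in B(T)\cap B(T')$ with $(j,l)$ by transitivity to get $(i,l)$, and in the second case $(l,k)$ is directly the needed pair. The symmetric case $i<l<j$ is analogous using the intermediate condition coming from $(i,j)\in M$. The case $l=j$ is immediate since both $(i,j)$ and $(j,k)$ are in $B(T)\cap B(T')$. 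So the argument is a finite case analysis, but it is the delicate point and deserves to be written carefully; everything else reduces to monotonicity of the ``absence'' conditions and to one application of cotransitivity of $B(S)$.
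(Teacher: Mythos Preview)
Your overall strategy matches the paper's: verify that the set $M$ on the right of~\eqref{eq:inversion_set_meet} satisfies the four conditions of Lemma~\ref{lem:permutree_inversion_sets}, then use Lemma~\ref{lem:inversion_set_inclusion} to identify $T(M)$ as the greatest lower bound. Your treatment of transitivity (the three-case split on the position of $l$ relative to $j$) and of the maximality of $T(M)$ among lower bounds (via cotransitivity of $B(S)$) are exactly the arguments the paper gives.

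There is, however, a genuine gap in your handling of conditions~3 and~4. You assert that because these are ``absence'' conditions and $M\subseteq B(T)$, the hypotheses ``can be transferred appropriately to $B(T)$''. But the hypothesis $(i,j)\notin M$ does \emph{not} transfer to $(i,j)\notin B(T)$: it may well happen that $(i,j)\in B(T)\cap B(T')$ while $(i,j)\notin M$ only because the intermediate clause fails, i.e.\ there is some $i<l^{*}<j$ with both $(i,l^{*})\notin B(T)\cap B(T')$ and $(l^{*},j)\notin B(T)\cap B(T')$. In that situation you cannot invoke property~3 for $B(T)$ (nor for $B(T')$) at the triple $i<j<k$, since the needed premise $(i,j)\notin B(T)$ is simply unavailable. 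The paper treats this residual case separately by contradiction: assuming $(i,k)\in M$, the intermediate clause for $(i,k)$ at $l^{*}$ forces $(l^{*},k)\in B(T)\cap B(T')$; on the other hand, choosing (say) $B(T)$ so that $(l^{*},j)\notin B(T)$, one has $(j,k)\in B(T)$ and $\delta_j\in\{\downn,\uppdownn\}$, so property~3 of $B(T)$ applied at the decorated index~$j$ with the triple $l^{*}<j<k$ yields $(l^{*},k)\notin B(T)$, a contradiction. This extra case analysis is the missing idea; conditions~3 and~4 are in fact the most delicate part of the verification rather than the easiest, and your monotonicity heuristic does not cover it.
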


Before proving Theorem~\ref{thm:permutree_meet} we give an example of how to compute the meet of two~$\delta$-permutrees and some remarks.

\begin{example}\label{ex:permutree_meet}
	Figure~\ref{fig:permutree_meet_example} presents the meet operation between~$\delta$-permutrees. Taking the last set of the right-hand side of Equation (\ref{eq:inversion_set_meet}) as~$I$, we have that the corresponding inversion sets in this case are \begin{equation*}
		\begin{split}
			B(T)&=           \{(2,3),(2,4),(2,5),(3,4)\}                         \\
			B(T')&=          \{(1,2),(1,4),(1,5),(2,4),(2,5),(3,4),(3,5),(4,5)\} \\
			I&=              \{(1,2),(1,3),(1,4),(2,3),(2,4),(3,4),(3,5),(4,5)\} \\
			B(T\wedge T')&=  \{(2,4),(3,4)\}
		\end{split}
	\end{equation*} and satisfy Theorem~\ref{thm:permutree_meet}. Translating this into the components of the inversion sets we have that \begin{multicols}{2}
		\noindent
		\begin{equation*}
			\begin{split}
				\emptyset\cap\{2,4,5\}\cap\{2,3\}&=\emptyset\\
				\{3,4,5\}\cap\{4,5\}\cap\{3,4\}&=\{4\}
			\end{split}
		\end{equation*}
		\begin{equation*}
			\begin{split}
				\{4\}\cap\{4,5\}\cap\{4,5\}&=\{4\}\\
				\emptyset\cap\{5\}\cap\{5\}&=\emptyset\\
			\end{split}
		\end{equation*}
	\end{multicols}
\end{example}

\begin{figure}[h!]
	\centering
	\includegraphics[scale=1]{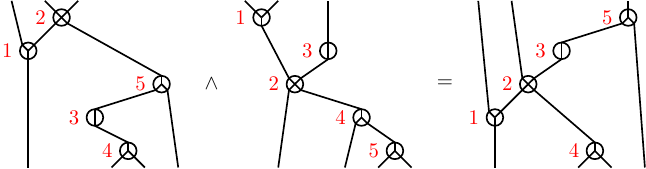}
	\caption{The meet of two~$\upp\uppdownn\nonee\downn\downn$-permutrees.}\label{fig:permutree_meet_example}
\end{figure}

\begin{remark}\label{rem:recovering_binary_meet_from_permutrees}
	Notice that inversion sets of~$\{\downn\}^n$-permutrees are bracket sets of binary trees as in Definition~\ref{def:bracket_vector}. We recover Proposition~\ref{prop:tamari_lattice_meet} whenever~$\delta\in\{\downn\}^n$ as~$B(T)\cap B(T')$ is contained in the last set of Equation (\ref{eq:inversion_set_meet}). To see this, notice that if~$(i,j)\in B(T)$ then~$v_l\in RD_i$ for all~$i<l<j$. If not, it would contradict that~$\delta_j=\downn$. Thus,~$(i,l)\in B(T)$. The same argument applies if~$(i,j)\in B(T')$ and thus in this case~$(i,j)\in B(T)\cap B(T')$ implies~$(i,l)\in B(T)\cap B(T')$ giving us the desired inclusion.
\end{remark}

\begin{remark}\label{rem:recovering_CPP_2}
	Our meet operation is similar to the \emph{tdd} operation defined in~\cite[Section 1.2.2]{CPP19} on integer posets. In this context, Theorem~\ref{thm:permutree_meet} can be seen as~\cite[Corollary 2.39]{CPP19} through the \emph{tdd} and our Lemma~\ref{lem:permutree_inversion_sets}. Nevertheless, we give here a direct proof without relying on the results of~\cite{CPP19}.
\end{remark}

We proceed to prove Theorem~\ref{thm:permutree_meet}.

\begin{proof}
	First let us see that~$B(T\wedge T')$ satisfies the conditions of Lemma~\ref{lem:permutree_inversion_sets} and thus defines~$T\wedge T'$ as a~$\delta$-permutree.

	Assume that~$(i,j),(j,k)\in B(T\wedge T')$. As~$(i,j),(j,k)\in B(T)\cap B(T')$, the transitivity of these sets tells us that~$(i,k)\in B(T)\cap B(T')$. We just need to see that~$(i,k)$ is also in the last set of Equation~\ref{eq:inversion_set_meet}. Let~$l$ such that~$i<l<k$. If~$i<l<j$, then as~$(i,j)\in B(T\wedge T')$ we have that either~$(i,l)\in B(T)\cap B(T')$ or~$(l,j)\in B(T)\cap B(T')$. In the former case we are done. In the latter, as~$(j,k)\in B(T)\cap B(T')$, using transitivity we get that~$(l,k)\in B(T)\cap B(T')$, and we are done. If instead~$l=j$, we immediately finish as by assumption~$(i,j),(j,k)\in B(T)\cap B(T')$. Finally, suppose that~$j<l<k$. In this case since~$(j,k)\in B(T\wedge T')$, either~$(j,l)\in B(T)\cap B(T')$ or~$(l,k)\in B(T)\cap B(T')$. In the latter case we finish. For the former, as~$(i,j)\in B(T)\cap B(T')$, using transitivity we get that~$(i,l)\in B(T)\cap B(T')$. Thus, we conclude that~$B(T\wedge T')$ is transitive.

	To see that~$B(T\wedge T')$ is cotransitive notice that its complement is the transitive closure of~$B(T)$ and~$B(T')$. That is,~$B(T\wedge T')^c=(B(T)^c\cup B(T')^c)^{tc}$. By definition of transitive closure it is immediate that~$B(T\wedge T')$ is cotransitive.

	Now suppose that~$\delta_j\in\{\downn,\uppdownn\}$,~$(i,j)\notin B(T\wedge T')$, and~$(j,k)\in B(T\wedge T')$. The last assumption tells us that~$(j,k)\in B(T)\cap B(T')$ and for all~$l$ such that~$j<l<k$, either~$(j,l)\in B(T)\cap B(T')$ or~$(l,k)\in B(T)\cap B(T')$. On the other hand, that~$(i,j)\notin B(T\wedge T')$ means that either~$(i,j)\notin B(T),\, (i,j)\notin B(T')$, or there exists~$i<l^*<j$ such that~$(i,l^*),(l^*,j)\notin B(T)\cap B(T')$. If either~$(i,j)\notin B(T)$ or~$(i,j)\notin B(T')$, then because of Property 3 of Lemma~\ref{lem:permutree_inversion_sets} and the fact that~$(j,k)\in B(T)\cap B(T')$ we have that~$(i,k)\notin B(T)$ and~$(i,k)\notin B(T')$ respectively. That is,~$(i,k)\notin B(T\wedge T')$ and we are done in this case.

	Consider then that~$(i,j)\in B(T)\cap B(T')$ and there exists~$i<l^*<j$ such that~$(i,l^*),(l^*,j)\notin B(T)\cap B(T')$. For contradiction’s sake suppose that~$(i,j)\in B(T\wedge T')$. By definition of~$B(T\wedge T')$ this means that either~$(i,l^*)\in B(T)\cap B(T')$ or~$(l^*,k)\in B(T)\cap B(T')$. The former case is a contradiction with the condition on which~$l^*$ exists, thus either~$(i,l^*)\in B(T)$ or~$(i,l^*)\in B(T')$ and the latter case happens. Without loss of generality suppose that~$(i,l^*)\in B(T)$. As~$(l^*,k)\in B(T)\cap B(T')\subset B(T)$, Property 3 of Lemma~\ref{lem:permutree_inversion_sets} tells us that~$(i,k)\notin B(T)$ and thus~$(i,k)\notin B(T\wedge T')$ as we wanted. The final Property of Lemma~\ref{lem:permutree_inversion_sets} follows a similar proof, and thus we omit it. We conclude that~$B(T\wedge T')$ indeed corresponds to a permutree~$T\wedge T'$.

	Let us now see that~$T\wedge T'$ is in fact the meet of~$T$ and~$T'$. Since~$B(T\wedge T')\subset B(T)$ and~$B(T\wedge T')\subset B(T')$ Lemma~\ref{lem:inversion_set_inclusion} tells us that~$T\wedge T'<T$ and~$T\wedge T'<T'$. Now suppose that there is a~$\delta$-permutree~$S$ such that~$S<T$ and~$S<T'$. We claim that~$S\leq T\wedge T$. Because of Lemma~\ref{lem:inversion_set_inclusion} we know that~$B(S)\subset B(T)\cap B(T')$. Let~$(i,j)\in B(S)\subset B(T)\cap B(T')$ and~$i<l<j$. Notice that if both elements~$(i,l),(l,j)\notin B(S)$, then~$(i,j)\notin B(S)$ as it is cotransitive, and we would have a contradiction. Without loss of generality suppose~$(i,l)\in B(S)$. As~$B(S)\subset B(T)\cap B(T')$, we have that~$(i,l)\in B(T)\cap B(T')$. Thus, for all~$i<l<j$ either~$(i,l)\in B(T)\cap B(T')$ or~$(l,j)\in B(T)\cap B(T')$. Meaning that,~$(i,j)\in B(T\wedge T')$ and we conclude that~$B(S)\subseteq B(T\wedge T')$. By Lemma~\ref{lem:inversion_set_inclusion} we get that~$S\leq T\wedge T'$ as we wished.
\end{proof}

\begin{corollary}\label{cor:permutrees_are_lattices}
	$\mathcal{PT}(\delta)$ is a lattice for any decoration~$\delta\in\{\nonee,\downn,\upp,\uppdownn\}^n$.
\end{corollary}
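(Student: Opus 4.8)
The plan is to deduce the lattice property directly from the meet construction of Theorem~\ref{thm:permutree_meet} together with the fact, recorded in Remark~\ref{rem:permutree_rotations_bounded}, that $(\cPT_n(\delta),\leq)$ is bounded. This is exactly the route taken for binary trees in Corollary~\ref{thm:tamari_lattice_proof}: a finite bounded poset in which every pair of elements admits a meet is automatically a lattice (see~\cite[Prop.3.3.1]{S11} or~\cite[Lem.9-2.1]{R16}). So the corollary is essentially a packaging statement, with all the combinatorial content already supplied by Lemma~\ref{lem:permutree_inversion_sets}, Lemma~\ref{lem:inversion_set_inclusion}, and Theorem~\ref{thm:permutree_meet}.

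Concretely, given two $\delta$-permutrees $T$ and $T'$, I would consider the set $U=\{S\in\cPT_n(\delta)\,:\,T\leq S\text{ and }T'\leq S\}$ of their common upper bounds. This set is finite, since there are finitely many $\delta$-permutrees on $n$ vertices, and nonempty, since $\hat{1}_\delta\in U$ by Remark~\ref{rem:permutree_rotations_bounded}. Writing $U=\{S_1,\ldots,S_m\}$ and setting $T\vee T':=S_1\wedge S_2\wedge\cdots\wedge S_m$, where each successive meet exists by Theorem~\ref{thm:permutree_meet}, one checks in the usual way that $T\vee T'$ is a least upper bound of $T$ and $T'$. Indeed, the meet of finitely many elements each lying above $T$ (resp.\ $T'$) still lies above $T$ (resp.\ $T'$) — this is immediate from the inversion-set formula of Equation~(\ref{eq:inversion_set_meet}) and Lemma~\ref{lem:inversion_set_inclusion}, since an intersection of inversion sets all containing $B(T)$ still contains $B(T)$ and the extra condition in~(\ref{eq:inversion_set_meet}) is automatically met once $B(T)\subseteq B(S_i)$ for all $i$ — so $T\vee T'\in U$; and any common upper bound of $T$ and $T'$ is one of the $S_i$, hence sits above $\bigwedge U=T\vee T'$. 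Combined with the meets furnished by Theorem~\ref{thm:permutree_meet}, this shows that every pair of elements of $\cPT_n(\delta)$ has both a meet and a join, i.e.\ $\cPT_n(\delta)$ is a lattice.

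I do not expect a genuine obstacle here: the only point requiring a line of justification is the order-compatibility of $\wedge$ used above, which, as noted, follows at once from Lemma~\ref{lem:inversion_set_inclusion} and the explicit formula in Theorem~\ref{thm:permutree_meet}. If one preferred a self-contained argument avoiding the citation to~\cite{S11,R16}, one could instead dualize the proof of Theorem~\ref{thm:permutree_meet} to produce an explicit join formula on inversion sets; but invoking the standard bounded-meet-semilattice principle is shorter and keeps the exposition parallel to the binary-tree case.
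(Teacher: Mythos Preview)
Your proposal is correct and follows essentially the same route as the paper: the paper's proof is a two-line invocation of Theorem~\ref{thm:permutree_meet} for the existence of meets, boundedness of the poset, and the standard fact~\cite[Prop.3.3.1]{S11},~\cite[Lem.9-2.1]{R16} that a bounded meet-semilattice is a lattice. Your additional paragraph spelling out the join construction and justifying $T\leq S_1\wedge\cdots\wedge S_m$ via inversion sets is harmless but unnecessary, since that inequality is immediate from the universal property of the meet and requires no appeal to Equation~(\ref{eq:inversion_set_meet}).
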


\begin{proof}
	The~$\delta$-tree rotation poset has a meet thanks to Theorem~\ref{thm:permutree_meet}. Since it is a bounded poset we get that it is a lattice~\cite[Prop.3.3.1]{S11},\cite[Lem.9-2.1]{R16}.
\end{proof}

\section{Cubic Vectors}\label{sec:cubic_vectors}

Having inversion vectors in hand, the reader might ask if it is the case that inversion vectors also give a cubic embedding of~$\delta$-permutree lattices. This is not the case as can be seen in Figure~\ref{fig:permutree_inversion_cubic_vectors}.

\begin{figure}[h!]
	\centering
	\includegraphics[scale=0.8]{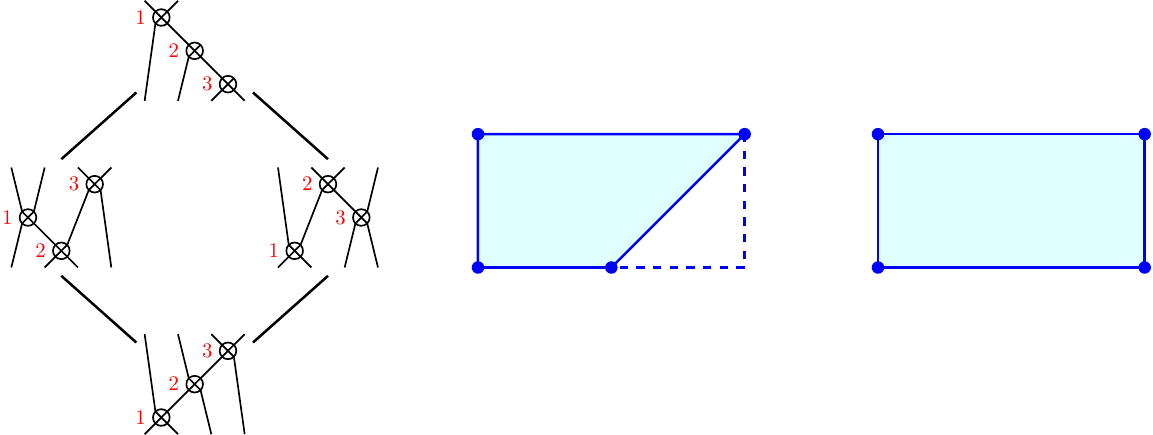}
	\caption{ The rotation lattice of~$\uppdownn\uppdownn\uppdownn$-permutrees (left) together with its geometric realizations using inversion vectors (middle) and cubic vectors (right). Taking~$T$ to be the middle-left~$\uppdownn\uppdownn\uppdownn$-permutree, we have~$\vec{b}(T)=(1,0)$ and~$\vec{c}(T)=(2,0)$.}\label{fig:permutree_inversion_cubic_vectors}
\end{figure}

One can still manage to get such an embedding, it suffices to slightly relax the definition of our sets.

\begin{definition}\label{def:cubic_vector_permutrees}
	Consider~$T\in\cPT_n(\delta)$ to be a~$\delta$-permutree. Its \defn{cubic set} is \begin{equation*}C(T):=\left\{(i,j)\,:\, \begin{array}{cc}
			i<j  \text{ and } v_j\in D_i & \text{ if } \delta_i\in\{\nonee,\upp\}      \\
			v_j\in RD_i                  & \text{ if } \delta_i\in\{\downn,\uppdownn\}\end{array} \right\}\end{equation*} and its \defn{cubic components} are~${C(T)}_i=\{j\in [n] \,:\, (i,j)\in C(T)\}$. A cubic set has an associated \defn{cubic vector}~$\vec{c}(T)=(c_1,\ldots,c_{n-1})$ such that~$c_i=|{C(T)}_i|$.
\end{definition}

\begin{remark}\label{rem:cubic_set_inclusion}
	Like in Remark~\ref{rem:inversion_set_inclusion}, we have that for a covering relation of~$\delta$-permutrees~$T\lessdot T'$ the respective cubic sets satisfy~$C(T')={(C(T)\cup\{(i,j)\})}^{tc}$. The key difference between the transitive closures of cubic vectors against inversion vectors is that the transitive closure turns into inversions the pairs of the form~$(i,x)$ where~$x\in RD_j(T)$ and nothing else. This is a consequence of the replacement of the condition~$j\to i$ in inversion sets to~$j\in RD_i$ (resp.~$i<j$ and~$j\in D_i$) in cubic sets.
\end{remark}

\begin{definition}\label{def:cubicrealizationpermutrees}
	Let~$T,T'\in\cPT_n(\delta)$. We say that there is an edge between~$\vec{c}(T)$ and~$\vec{c}(T')$ if and only if~$T\lessdot T'$. The convex hull of the cubic vectors together with this collection of edges is called the \defn{cubical realization}~$\cC_\delta$ of~$(\mathcal{PT}(\delta),\leq)$.
\end{definition}

\begin{example}\label{ex:cubic_realization_tamari_others}
	If~$\delta=\downn\downn\downn\downn$ (resp.~$\delta=\nonee\nonee\nonee\nonee$), the cubic vector reduces to the bracket vector of binary trees (resp.\ to the Lehmer code of permutations), and we recover the cubic realizations of the Tamari lattice in~\cite{C22} and~\cite{K93} and the weak order of~\cite{BF71} and~\cite{RR02}. See Figure~\ref{fig:permutreehedron_cubic} for these cubic realizations and other examples.
\end{example}

\begin{figure}[h!]
	\centering
	\includegraphics[scale=1]{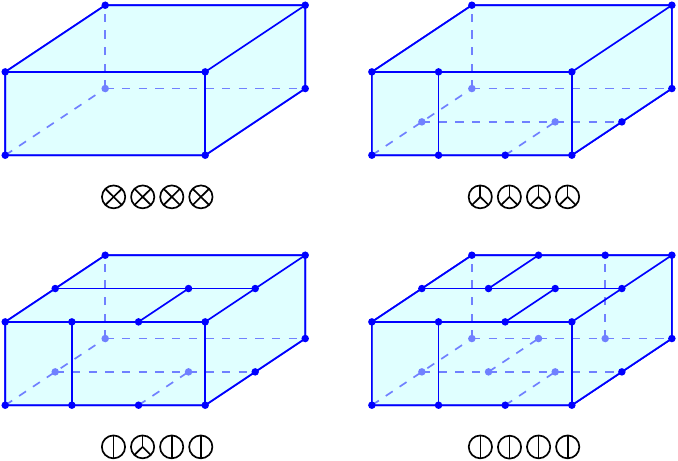}
	\caption{The Cubical realization~$\cC_\delta$ of several permutreehedra.}\label{fig:permutreehedron_cubic}
\end{figure}

We now enunciate several properties of cubical realizations which culminate in showing that~$\cC_\delta$ is an embedding of~$\PPT_n(\delta)$ into the cube~$Q_n=[0,n-1]\times\cdots\times[0,1]$.

\begin{theorem}\label{thm:cubic_property_edge_direction}
	If~$T,T'\in\cPT_n(\delta)$ are~$\delta$-permutrees such that~$T< T'$ in the~$\delta$-permutree rotation lattice, then~$\vec{c}(T)<_{lex} \vec{c}(T')$ and the edges of~$C_\delta$ have directions~$\mathbf{e_i}$.
\end{theorem}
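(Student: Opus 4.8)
The plan is to reduce everything to cover relations and then read off the result from Remark~\ref{rem:cubic_set_inclusion}. Since the rotation order $<$ on $\cPT_n(\delta)$ is the transitive closure of the cover relation $\lessdot$, and $<_{lex}$ is a total (hence transitive) order on $\ZZ_{\ge 0}^{n-1}$, it is enough to prove the statement for a single cover $T\lessdot T'$: namely that $\vec c(T)<_{lex}\vec c(T')$ and that $\vec c(T')-\vec c(T)$ is a strictly positive multiple of a coordinate vector $\mathbf{e_i}$. For a general comparable pair $T<T'$ one then takes a saturated chain $T=T_0\lessdot T_1\lessdot\cdots\lessdot T_m=T'$ in the (finite) lattice $\cPT_n(\delta)$ and composes the lexicographic inequalities along it.

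So fix a cover $T\lessdot T'$, realized by an $ij$-edge rotation with $1\le i<j\le n$. First I would record that $(i,j)\notin C(T)$: by definition every pair of $C(T)$ encodes a genuine descendant relation (in all cases $v_{j'}\in D_{i'}$ or $v_{j'}\in RD_{i'}$ implies $j'\to i'$), so $C(T)\subseteq B(T)$; and by Remark~\ref{rem:inversion_set_inclusion} the rotation turns $(i,j)$ into a \emph{new} inversion of $T'$, so $(i,j)\notin B(T)\supseteq C(T)$. Now Remark~\ref{rem:cubic_set_inclusion} gives $C(T')=\bigl(C(T)\cup\{(i,j)\}\bigr)^{tc}$ and, crucially, that the pairs created by this transitive closure are all of the form $(i,x)$ with $x\in RD_j(T)$ --- in particular they all have first coordinate $i$. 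Hence $C(T')_k=C(T)_k$ for every $k\neq i$, while $C(T')_i\supseteq C(T)_i\cup\{j\}\supsetneq C(T)_i$.

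Passing to cardinalities, $c_k(T')=c_k(T)$ for all $k\neq i$ and $c_i(T')>c_i(T)$, i.e. $\vec c(T')-\vec c(T)=\bigl(c_i(T')-c_i(T)\bigr)\mathbf{e_i}$ with positive coefficient; this is exactly the assertion that the edge of $\cC_\delta$ associated with the $ij$-rotation points in direction $\mathbf{e_i}$. Moreover $\vec c(T)$ and $\vec c(T')$ agree in coordinates $1,\dots,i-1$ and differ in coordinate $i$ with $c_i(T)<c_i(T')$, so $\vec c(T)<_{lex}\vec c(T')$. The saturated-chain argument of the first paragraph then yields $\vec c(T)<_{lex}\vec c(T')$ for every $T<T'$.

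The one nontrivial ingredient, and where I expect the real work to sit, is the ``and nothing else'' half of Remark~\ref{rem:cubic_set_inclusion}: that $\bigl(C(T)\cup\{(i,j)\}\bigr)^{tc}$ never leaves row $i$. This is precisely the place where cubic sets are better behaved than inversion sets --- whose transitive closure \emph{does} propagate to rows other than $i$ when $\delta_i\in\{\upp,\uppdownn\}$ (cf.\ Remark~\ref{rem:inversion_set_inclusion}) --- and it is forced by the fact that $C(T)$ only inspects the immediate descendant subtree $D_i$ (or $RD_i$) of $v_i$, not the full set of descendants of $v_i$. A self-contained proof would check, case by case on $\delta_i\in\{\nonee,\downn,\upp,\uppdownn\}$, that the $ij$-rotation inserts $RD_j(T)$ into the relevant descendant subtree of $v_i$ and modifies no other vertex's relevant descendant subtree; granting that, the rest is bookkeeping.
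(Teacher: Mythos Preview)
Your proof is correct and follows essentially the same approach as the paper: reduce to cover relations, invoke Remark~\ref{rem:cubic_set_inclusion} to get $C(T')=(C(T)\cup\{(i,j)\})^{tc}$ with all new pairs lying in row $i$, and conclude that $\vec c(T')-\vec c(T)$ is a positive multiple of $\mathbf{e_i}$. If anything, your write-up is more careful than the paper's (you explicitly verify $(i,j)\notin C(T)$ and flag that the substance lives in the ``and nothing else'' clause of Remark~\ref{rem:cubic_set_inclusion}), but the argument is the same.
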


\begin{proof}
	We prove it for covering relations~$T\lessdot T'$ as the~$\delta$-rotation order is the transitive closure of the relations in Figure~\ref{fig:permutree_rotations}. Following Remark~\ref{rem:cubic_set_inclusion} we have that the inclusion~$C(T')=(C(T)\cup\{(i,j)\})^{tc}$ and the relations between components~$C(T)_i\subset C(T')_i$ and~$C(T)_j= C(T')_j$ if~$j\neq i$. This tells us that~$\vec{c}(T')-\vec{c}(T)=(0,\ldots,0,c(T')_i-c(T)_i,0,\ldots,0)$. Therefore,~$\vec{c}(T)\leq_{lex}\vec{c}(T')$ and the edge~$[\vec{c}(T),\vec{c}(T')]$ has direction~$\mathbf{e_i}$.
\end{proof}

\begin{remark}
	The converse of Theorem~\ref{thm:cubic_property_edge_direction} is true for binary trees but not for permutrees in general. Note that in $\mathcal{C}_{\nonee\nonee\nonee\nonee}$ of Figure~\ref{fig:permutreehedron_cubic} $(1,0,1)\leq_{lex}(1,1,1)$, but there is no edge connecting them.
\end{remark}

\begin{theorem}\label{thm:cubic_property_convec_cube}
	$\cC_\delta$ is normal equivalent to~$\PCube_{n-1}$ (i.e. has the same normal fan).
\end{theorem}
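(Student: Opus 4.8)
The plan is to show that $\cC_\delta$ and $\PCube_{n-1}=[0,n-1]\times\cdots\times[0,1]$ have the same normal fan by comparing their vertices, edges, and edge directions. First I would recall that the normal fan of a polytope is determined by its face lattice together with the affine hulls of the faces (equivalently, by the list of vertices and, for each vertex, the cone of directions of edges emanating from it). Since $\PCube_{n-1}$ is a (stretched) cube, its vertices are exactly the $0/1$-type corners obtained by choosing, for each coordinate $i\in[n-1]$, either the value $0$ or the value $n-i$; at each vertex the edges point in the directions $\pm\mathbf{e}_i$ for precisely those $i$ whose coordinate can still be moved, and the normal fan is the fan of the $n-1$ coordinate hyperplanes. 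So it suffices to prove that $\cC_\delta$ is combinatorially a cube with edges parallel to the coordinate axes and with the same vertex-edge incidence pattern, i.e.\ that $\cC_\delta$ is a ``generalized cube'' whose $i$-th pair of opposite facets is cut out by bounding the $i$-th coordinate.

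The key steps, in order, would be: (1) Use Theorem \ref{thm:cubic_property_edge_direction} to see that every edge of $\cC_\delta$ (coming from a covering relation $T\lessdot T'$) is parallel to some $\mathbf{e}_i$; combined with Theorem \ref{thm:cubic_property_edge_direction}'s lexicographic statement, this already orients all edges. (2) Show that the map $T\mapsto \vec c(T)$ is injective, so that $\cC_\delta$ genuinely has one vertex per $\delta$-permutree, and identify its image: characterize which vectors $(c_1,\dots,c_{n-1})$ arise as cubic vectors, in the spirit of Proposition \ref{prop:bracket_vector_characterization} and Lemma \ref{lem:permutree_inversion_sets} but for the relaxed cubic sets (here the relevant constraints coming from $\delta$ collapse, by Remark \ref{rem:cubic_set_inclusion}, to: $C(T)_i$ is an initial interval $\{i+1,\dots,i+c_i\}$ when $\delta_i\in\{\downn,\uppdownn\}$, and an interval of a controlled shape when $\delta_i\in\{\nonee,\upp\}$, with the nesting condition $j\in C(T)_i\Rightarrow C(T)_j\subseteq C(T)_i$). (3) Show that the extreme values $c_i=0$ and $c_i=($ its maximum, which one checks equals $n-i$ in the appropriate normalization used to define $Q_n$) are each attained, and that fixing a subset of coordinates to their extreme values always remains realizable — this exhibits the face structure of a cube. (4) Conclude that the convex hull $\cC_\delta$ is affinely a cube with facet normals $\pm\mathbf{e}_1,\dots,\pm\mathbf{e}_{n-1}$, hence has the same normal fan as $\PCube_{n-1}$.

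The main obstacle I anticipate is step (2)–(3): one must argue that the face structure of the convex hull of the cubic vectors is exactly that of a combinatorial cube — in particular that \emph{every} face of $\conv\{\vec c(T)\}$ is obtained by fixing some coordinates to $0$ or to their maximum, with all intermediate values still present — rather than the convex hull accidentally having extra vertices or missing some cube faces. Concretely, the delicate point is to verify that whenever we pick, for each $i$ in a chosen set $S\subseteq[n-1]$, an extreme value $\epsilon_i\in\{0,\max_i\}$, there is a $\delta$-permutree $T$ with $c_i(T)=\epsilon_i$ for $i\in S$; this is a compatibility/consistency check on the characterization of cubic vectors, and it is where the relaxation from inversion sets to cubic sets (Remark \ref{rem:cubic_set_inclusion}) does the real work, since for cubic sets the transitive closure only ever introduces pairs $(i,x)$ with $x\in RD_j$, decoupling the coordinates enough that the extreme choices never conflict. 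Once this is established, matching the normal fans is immediate, since both polytopes are full-dimensional (in the ambient hyperplane $\sum x_i=\binom{n+1}{2}$, or in $\RR^{n-1}$ after projection) cubes with coordinate-axis edges.
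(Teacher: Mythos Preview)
Your plan is workable in spirit but is vastly more elaborate than what is actually required, and the extra machinery (injectivity of $T\mapsto\vec c(T)$, a full characterization of cubic vectors, an analysis of the face lattice) plays no role in the paper's argument. The paper proves the theorem in one line of logic: since $0\le c(T)_i\le n-i$ for every $T$ and every $i$, the convex hull $\cC_\delta$ is contained in the box $Q_{n-1}=[0,n-1]\times\cdots\times[0,1]$; conversely, for every corner $\vec r\in\{0,n-1\}\times\cdots\times\{0,1\}$ of $Q_{n-1}$ the paper exhibits, via an explicit placement-plus-insertion construction, a $\delta$-permutree $T(\vec r)$ with $\vec c(T(\vec r))=\vec r$. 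Hence all $2^{n-1}$ vertices of $Q_{n-1}$ lie in $\cC_\delta$, so $\cC_\delta=Q_{n-1}$ \emph{as polytopes}, and in particular they have the same normal fan. Your step~(3), specialized to $S=[n-1]$, is exactly this corner-realization step, and it alone suffices; steps~(1), (2), and the combinatorial-cube analysis are superfluous for this theorem (injectivity is proved separately as Theorem~\ref{thm:cubic_property_injective}, and edge directions are only used later for the embedding Theorem~\ref{thm:cubic_property_embedding}).

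One small error to flag: your final parenthetical about working ``in the ambient hyperplane $\sum x_i=\binom{n+1}{2}$'' is misplaced. That hyperplane pertains to the permutreehedron coordinates $\mathbf a(T)$ of Section~\ref{ssec:permutrees}, not to cubic vectors; the cubic vectors live directly in $\RR^{n-1}$ with no affine constraint, and $\cC_\delta$ is genuinely full-dimensional there.
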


\begin{proof}
	Let~$Q_{n-1}:=[0,n-1]\times\cdots\times[0,1]$. First note that~$0\leq c(T)_i\leq n-i$ for all~$i\in [n-1]$ meaning that~$\mathcal{C}_{\delta}\subset Q_{n-1}$. To see the reverse inclusion it is enough to prove that all vectors~$\vec{r}=(r_1,\ldots,r_{n-1})$ where~$r_i\in\{0,n-i\}$, have a preimage through the function~$f:\cPT_n(\delta)\to \cC_\delta$ such that~$f(T)=\vec{c}(T)$. We call such preimages \defn{extremal}~$\delta$-permutrees.

	Take any such~$\vec{r}$. We now present how to construct a~$\delta$-permutree in the preimage~$f^{-1}(\vec{r})$. Consider an~$n\times n$ grid. At step~$1$ place~$v_1$ at~$(1,1)$ (resp.~$(1,n)$) if~$r_1=0$ (resp.~$r_1=n-1$). At step~$i$ place~$v_i$ at~$(i,d)$ (resp.~$(i,n-u)$) where~$d:=|\{j\in[n]\,\: j<i \text{ and } r_j=0\}|$ (resp.~$u:=|\{j\in[n]\,\: j<i \text{ and } r_j=n-j\}|$). After step~$n-1$ place~$v_n$ in the only coordinate of column~$n$ that shares no vertex horizontally. Thus, we get a permutation table. Decorate each vertex~$v_i$ with the decoration~$\delta_i$. Following the insertion algorithm of~\cite{PP18} we obtain a~$\delta$-permutree~$T(\vec{r})$.

	Notice that in~$T(\vec{r})$, for each vertex~$v_i$ we have either~$|RD_i|=0$ (resp.~$|\{j\in[n]\,:\,i<j \text{ and } v_j\in D_i\}|=0$) or~$|RD_i|=n-i$ (resp.~$|\{j\in[n]\,:\,i<j \text{ and } v_j\in D_i\}|=n-i$) That is, the values corresponding to~$\vec{r}$. Therefore,~$\cC_\delta=Q_{n-1}$ and normal equivalent to~$\PCube_{n-1}$.
\end{proof}

\begin{remark}\label{rem:cubic_embedding_interior_points}
	Notice that since the interior of~$Q_{n-1}$ has no integer points, we have that all cubic coordinates are on the surface of~$\mathcal{C}_{\delta}$.
\end{remark}

Figure~\ref{fig:permutree_cubic_insertion_algorithm} shows an example of the construction of extremal permutrees described in the proof of Theorem~\ref{thm:cubic_property_convec_cube}. In Figure~\ref{fig:permutree-inversion-vector-IXYI} the extremal~$\nonee\uppdownn\upp\downn$-permutrees are colored in black while the~$2$ that are extremal are colored in brown. We now show that these preimages are unique as a part of the following bigger result.

\begin{figure}[h!]
	\centering
	\includegraphics[scale=0.8]{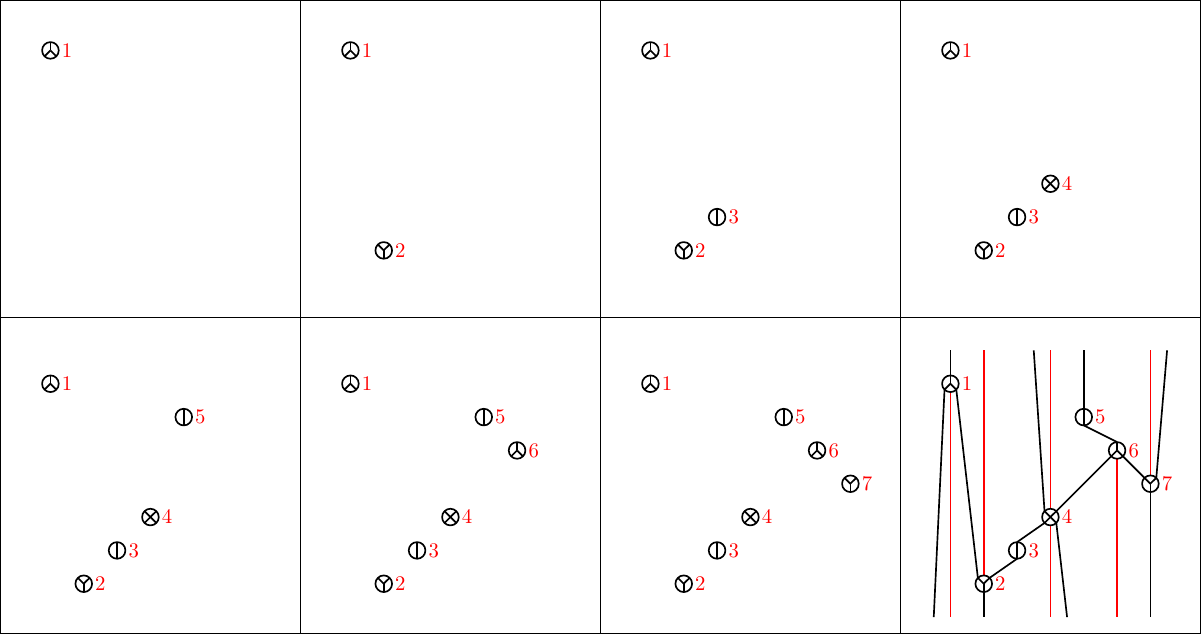}
	\caption{ The construction of the extremal~$\downn\upp\nonee\uppdownn\nonee\downn\upp$-permutree corresponding to the corner~$(6,0,0,0,2,1)\in Q_{6}$.}\label{fig:permutree_cubic_insertion_algorithm}
\end{figure}

\begin{theorem}\label{thm:cubic_property_injective}
	The map~$f:\cPT_n(\delta)\to\cC_\delta$ sending a~$\delta$-permutree to its cubic vector is injective.
\end{theorem}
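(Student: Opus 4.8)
The plan is to recover a $\delta$-permutree from its cubic vector $\vec{c}(T)$ alone, which immediately gives injectivity of $f$. The key point is that the cubic vector, together with the decoration $\delta$ which is fixed, determines not just the sizes of the cubic components but the components themselves, and from there the full tree via the insertion algorithm of~\cite{PP18}. So the main task is to show that the numbers $c_i = |C(T)_i|$ force the sets $C(T)_i$.

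First I would establish, as a structural lemma, that for a $\delta$-permutree $T$ the cubic component $C(T)_i$ is always an \emph{initial segment} of $\{i+1, i+2, \ldots, n\}$, i.e. $C(T)_i = \{i+1, \ldots, i+c_i\}$. This is the cubic analogue of Property~1 in Proposition~\ref{prop:bracket_vector_characterization}: when $\delta_i \in \{\downn, \uppdownn\}$ the set $C(T)_i$ is exactly $\{j > i : v_j \in RD_i\}$, and since $RD_i$ contains only vertices with labels greater than $i$ and is "closed to the left" within that range (a consequence of condition~2 in the definition of a permutree, together with transitivity of $\to$ within a subtree), it must be $\{i+1, \ldots, i+c_i\}$; when $\delta_i \in \{\nonee, \upp\}$, $D_i = D_i$ is the unique descendant subtree and $\{j>i : v_j \in D_i\}$ is again an initial segment by the same reasoning (here $D_i$ may also contain vertices smaller than $i$, but those are not recorded in $C(T)_i$). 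Once this is known, $C(T)_i$ is recovered from the single integer $c_i$, hence $C(T)$ is recovered from $\vec{c}(T)$ and $\delta$.

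Next I would argue that $C(T)$, together with $\delta$, determines $T$. This is where I invoke the construction already used in the proofs of Lemma~\ref{lem:permutree_inversion_sets} and Theorem~\ref{thm:cubic_property_convec_cube}: given the transitive-closure data one places the vertices $v_n, v_{n-1}, \ldots, v_1$ on an $n \times n$ grid so that $v_i$ sits above exactly those $v_j$ with $(i,j)$ in the relevant set and below the rest, obtaining a permutation table, and then the insertion algorithm yields a unique $\delta$-permutree. The content to check is that the cubic set $C(T)$ (rather than the inversion set) carries enough information to pin down the relative vertical order of all vertices: for $j>i$, whether $v_j$ lies above $v_i$ in the table is read off from whether $j \in C(T)_i$ when $\delta_i \in \{\downn,\uppdownn\}$; when $\delta_i \in \{\nonee,\upp\}$ the same holds because then $D_i$ is the full descendant set among larger labels. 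Thus the permutation table underlying $T$, and hence $T$ itself, is a function of $C(T)$ and $\delta$.

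The main obstacle I anticipate is the structural lemma in the second paragraph — specifically, verifying that $C(T)_i$ is genuinely an initial segment in all four decoration cases, and in particular that when $v_i$ has two children the "left" descendants (which are \emph{not} counted) do not interleave label-wise with the counted "right" descendants in a way that breaks the initial-segment property; this is exactly what condition~2 of the permutree definition prevents ($LD_i$ has labels $<i$, $RD_i$ has labels $>i$), so the argument should go through, but it needs to be stated carefully. A secondary subtlety is making sure the recovery-from-the-table step does not secretly rely on inversion data that the cubic set discards; I would handle this by checking that any two distinct $\delta$-permutrees already differ in some $C(T)_i$, equivalently in some $c_i$, which follows once the initial-segment lemma is in place since then $C$ is injective on $\cPT_n(\delta)$ and $\vec{c}$ determines $C$.
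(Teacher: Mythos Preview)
Your structural lemma --- that $C(T)_i$ is always the initial segment $\{i+1,\ldots,i+c_i\}$ --- is false, and the whole proposal rests on it. Take $\delta=\nonee\nonee\nonee$ and the permutation $312$; the corresponding permutree is the chain $v_3\to v_1\to v_2$, so $D_1=\{v_3\}$ and $C(T)_1=\{3\}$, not $\{2\}$. More generally, for $\delta=\nonee^n$ the cubic vector is the Lehmer code (Example~\ref{ex:cubic_realization_tamari_others}), and the set $\{j>i:\sigma^{-1}(j)<\sigma^{-1}(i)\}$ is almost never an initial segment. The same failure occurs even when $\delta_i\in\{\downn,\uppdownn\}$ once neighbouring decorations are relaxed: with $\delta=\nonee\downn\nonee\nonee$ and the permutree where $v_2$ has children $v_1,v_4$ and parent $v_3$, one gets $RD_2=\{v_4\}$ so $C(T)_2=\{4\}$, again not an initial segment. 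Your justification (``$RD_i$ contains only labels $>i$ and is closed to the left'') overlooks that labels $>i$ can sit in the \emph{ancestor} subtree $A_i$ (or $RA_i$), not just in $RD_i$.

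What the paper does instead is exactly the inductive refinement your single-index lemma tries to shortcut: it takes the \emph{maximal} $i$ for which the cubic components differ and argues that if $|C(T)_i|=|C(T')_i|$ there, then some component at a larger index $r$ must already differ, contradicting maximality. In other words, $c_i$ by itself does not determine $C(T)_i$, but $c_i$ together with all $C(T)_j$ for $j>i$ does --- just as the Lehmer code is decoded from the top down. Your proposal could be salvaged by proving that stronger statement (``given $C(T)_j$ for $j>i$, the set $C(T)_i$ is determined by $c_i$'') rather than the initial-segment claim, and then running your reconstruction; but as written, the key lemma fails on the smallest nontrivial examples.
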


\begin{proof}
	Consider~$T,T'\in \mathcal{PT}_{\delta}$ two different~$\delta$-permutrees. Due to them being different, there is a maximal vertex~$i$ such that~$RD(T)_i\neq RD(T')_i$ (resp.~$\{j\in[n]\,:\, i<j \text{ and } v_j\in D(T)_i\}\neq\{j\in[n]\,:\, i<j \text{ and } v_j\in D(T')_i\}$). If~$c(T)_i=|RD(T)_i|\neq|RD(T')_i|=c(T')_i$ (or the equivalent in the~$D_i$ case) we are done. Otherwise, there exists a maximal vertex~$r\in RD(T)_i\cap RD(T')_i$ such that~$RD(T)_r\neq RD(T')_r$ which contradicts the existence of~$i$. Therefore,~$\vec{c}(T)\neq\vec{c}(T')$.
\end{proof}

\begin{theorem}\label{thm:cubic_property_embedding}
	$\mathcal{C}_\delta$ is an embedding of the~$\delta$-permutreehedron. In particular, maximal cells of~$\mathcal{C}_{\delta}$ are in bijection with facets of the~$\delta$-permutreehedron.
\end{theorem}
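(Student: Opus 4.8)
The plan is to compare the boundary complex of the $\delta$-permutreehedron with the complex formed by the convex hulls of cubic vectors of the faces of $\PPT(\delta)$, arguing by induction on $n$. Recall from~\cite{PP18} that the face lattice of $\PPT(\delta)$ is purely combinatorial: a face $F$ is determined by the set $V(F)\subseteq\cPT_n(\delta)$ of $\delta$-permutrees lying on it, the facets are the sets $\mathcal F_I=\{T\in\cPT_n(\delta):(I\,\|\,[n]\setminus I)\text{ is an edge cut of }T\}$ for $I\in\mathcal I$, and every proper face of $\PPT(\delta)$ is (combinatorially) a product of permutreehedra of strictly smaller decorations, with $\mathcal F_I\cong\PPT(\delta|_I)\times\PPT(\delta|_{[n]\setminus I})$ after the relabellings of Remark~\ref{rem:permutree_leftmost_rightmost_labels}. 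For a face $F$ put $\sigma(F):=\conv\{\vec c(T):T\in V(F)\}\subseteq Q_{n-1}$ (recall $\mathcal C_\delta=Q_{n-1}$ by Theorem~\ref{thm:cubic_property_convec_cube}). The theorem amounts to showing that $F\mapsto\sigma(F)$ is an isomorphism of polytopal complexes realizing $\partial\PPT(\delta)$ as a polytopal subdivision of $\partial Q_{n-1}$; the second sentence is its restriction to facets.

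For the formal part, with the trivial base cases $n\le2$, I would argue as follows. By Theorem~\ref{thm:cubic_property_edge_direction} every edge of $\PPT(\delta)$ at a vertex $T$ moves $\vec c(T)$ along a coordinate direction $\mathbf e_i$, and distinct edges of a fixed face $F$ at $T$ use distinct such directions; combining this with the product description of $F$ and the induction hypothesis applied to each factor, $\sigma(F)$ is an honest polytope of dimension $\dim F$ lying in a coordinate flat of $Q_{n-1}$, on which $\vec c$ restricts to the cubical embedding of $\PPT(\delta|_I)\times\PPT(\delta|_{[n]\setminus I})$ (in a possibly coordinatewise rescaled cube); in particular $\sigma(F)\subseteq\partial Q_{n-1}$ for every proper $F$. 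The map $\sigma$ is order preserving and injective, since $V(F)\subseteq V(F')$ iff $F\subseteq F'$ and $\vec c$ is injective on vertices by Theorem~\ref{thm:cubic_property_injective}. Two distinct facets of $\PPT(\delta)$ meet in a lower face, whose $\sigma$-image is of strictly smaller dimension by the count above, so the cells $\sigma(\mathcal F_I)$ have pairwise disjoint relative interiors, and the same count together with the induction hypothesis on shared faces makes the collection face-to-face. Finally $\bigcup_I\sigma(\mathcal F_I)$ is a union of $(n-2)$-cells inside the $(n-2)$-sphere $\partial Q_{n-1}$ that is closed by compactness and open by invariance of domain, hence equal to $\partial Q_{n-1}$ since the latter is connected. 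Thus $\sigma$ is an isomorphism onto a polytopal subdivision of $\partial Q_{n-1}$, matching facets of $\PPT(\delta)$ with the maximal cells of $\mathcal C_\delta$.

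The genuine content is the structural input used above: for each $I\in\mathcal I$ the cubic vectors $\{\vec c(T):T\in V(\mathcal F_I)\}$ are in convex position, span exactly $n-2$ dimensions, and lie on $\partial Q_{n-1}$ (and similarly for lower faces). I would prove this by splitting $T\in V(\mathcal F_I)$ along the cut edge into $T_I\in\cPT(\delta|_I)$ and $T_{[n]\setminus I}\in\cPT(\delta|_{[n]\setminus I})$: one checks that a coordinate determined by the cut edge is frozen throughout $\mathcal F_I$ to $0$ or to its maximal value (for binary trees this is the coordinate indexed by $\max I$, or by $\min I-1$ when $\max I=n$; in general with a case split according to whether the cut edge attaches on the left or the right, governed by the decoration exactly as the red walls do in the proof of Lemma~\ref{lem:permutree_inversion_sets}), while the remaining coordinates of $\vec c(T)$ separate into the two groups computing $\vec c(T_I)$ and $\vec c(T_{[n]\setminus I})$, so the induction hypothesis applies factorwise. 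Carrying out this bookkeeping — in particular identifying which coordinate flat of $Q_{n-1}$ each cell occupies, clean for pendant-type cuts such as $\{x_1=0\}$ but subtler for the ``bent'' cells and the sub-boxes coming from cuts with both sides of size $\ge2$, such as the pentagonal and square facets visible in Figure~\ref{fig:associahedron_cubic} — is where essentially all the work lies; everything else is the soft sphere argument above, resting on Theorems~\ref{thm:cubic_property_edge_direction}, \ref{thm:cubic_property_convec_cube} and~\ref{thm:cubic_property_injective}.
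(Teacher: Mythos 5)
Your proposal is correct in substance and its crucial ingredient coincides with the paper's: both arguments identify the facets of~$\PPT(\delta)$ with edge cuts~$(I\,\|\,J)$ and both pin down the corresponding cell of~$\cC_\delta$ by the same frozen coordinate, namely~$x_{\max(I)}=0$ when~$n\in J$ and~$x_{\max(J)}=n-\max(J)$ when~$n\in I$ (your binary-tree description~$\min I-1$ agrees with~$\max(J)$ there), with the remaining~$n-2$ coordinates varying. Where you diverge is in the packaging. The paper works directly with each edge cut: it builds the minimal and maximal~$(I\,\|\,J)$-admitting permutrees~$\underline{T}$ and~$\overline{T}$ by gluing~$\hat{0}_{\delta_I},\hat{0}_{\delta_J}$ (resp.\ $\hat{1}_{\delta_I},\hat{1}_{\delta_J}$), defines the cell as the image of the interval~$[\underline{T},\overline{T}]$, shows it lies in the boundary hyperplane above, and then invokes Theorems~\ref{thm:cubic_property_edge_direction},~\ref{thm:cubic_property_convec_cube} and~\ref{thm:cubic_property_injective} to conclude. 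You instead set up a map~$F\mapsto\sigma(F)$ on the whole face lattice, run an induction on~$n$ through the product decomposition~$\PPT(\delta|_I)\times\PPT(\delta|_J)$ of facets, and close with the open-and-closed argument on the sphere~$\partial Q_{n-1}$. Your route is heavier but buys something the paper leaves implicit: an explicit reason why the facet cells have pairwise disjoint relative interiors, meet face-to-face, and actually cover all of~$\partial Q_{n-1}$ (the paper essentially stops at ``each facet gives a maximal cell'' plus the bijection). On the other hand, both treatments leave the same bookkeeping at the level of a sketch --- the verification that the cut edge freezes exactly one coordinate to~$0$ or to its maximum and that the remaining coordinates split cleanly into the two factors~$\vec{c}(T_I)$ and~$\vec{c}(T_J)$ --- so on the genuinely technical point you are no less (and no more) complete than the paper; if you carry out that case analysis on the decorations, your version is a valid and somewhat more self-contained proof.
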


\begin{proof}
	Recall from Subsection~\ref{ssec:permutrees} that the facets of the~$\delta$-permutreehedron are in bijection with the proper subsets~$I\subsetneq[n]$ such that there is a~$\delta$-permutree that admits~$(I\,\|\,{[n]}\setminus {I})$ as an edge cut. Let~$J:=[n]\setminus I$. As~$\delta$-permutrees are connected, edge cuts partition a~$\delta$-permutree~$T$ into a~$\delta_I$-permutree~$T_I$ and a~$\delta_J$-permutree~$T_J$ which as subtrees are connected only via an edge~$(i,j)$ such that~$i\to j$ and~$v_i\in T_i$ and~$v_j\in T_j$.

	Take an edge-cut~$(I\,\|\, J)$. We proceed to construct a cell~$K$ of~$\mathcal{C}_\delta$ (coming from a polygonal interval of the rotation lattice) containing all cubic vectors~$\vec{c}(T)$ of~$\delta$-permutrees~$T$ that admit said edge-cut. Consider the minimal elements~$\hat{0}_{\delta_I}$ and~$\hat{0}_{\delta_J}$ (resp. maximal elements~$\hat{1}_{\delta_I}$ and~$\hat{1}_{\delta_J}$). Connecting them via the insertion algorithm gives us the~$(I\,\|\, J)$-admitting~$\delta$-permutree~$\underline{T}$ given by~$\underline{T}_I:=\hat{0}_{\delta_I}$ and~$\underline{T}_J:=\hat{0}_{\delta_J}$ (resp.~$\overline{T}$ given by~$\overline{T}_I:=\hat{1}_{\delta_I}$ and~$\overline{T}_J:=\hat{1}_{\delta_J}$). Notice that~$\underline{T}$ (resp.~$\overline{T}$) is the minimal (resp.\ maximal)~$\delta$-permutree that admits~$(I\,\|\, J)$ as an edge cut. This in turn shows that~$\vec{c}(\underline{T})$ (resp.~$\vec{c}(\overline{T})$) is the lexicographical minimal (resp.\ maximal) cubic vector that relate with this edge-cut. Thus, we define our cell as~$K:=\{\vec{c}(T)\in\mathcal{C}_\delta\,:\,\underline{T}\leq T\leq\overline{T}\}$.

	Let us see that~$K$ is maximal by showing it is contained in a hyperplane. Suppose that~$n\in J$. In such case, for any~$\delta$-permutree~$T$ such that~$\underline{T}\leq T\leq\overline{T}$ we have that~$RD(T)_{\max(I)}=\emptyset$ (resp.~$\{j\in[n]\,:\, \max(I)<j \text{ and } v_j\in D(T)_{\max(I)}\}=\emptyset$) and we conclude that get that~$K$ is in the hyperplane~$x_{\max(I)}=0$. If instead~$n\in I$, then we obtain that~$K$ is in the hyperplane~$x_{\max(J)}=n-\max(J)$ following a similar argument. Finally, note that all other~$n-2$ entries of the cubic vectors change between~$\underline{T}$ and~$\overline{T}$ through rotations between the vertices~$I$ or~$J$. This together with Theorem~\ref{thm:cubic_property_edge_direction} gives us that~$K$ is a maximal cell of~$\mathcal{C}_\delta$.

	The conjunction of Theorems~\ref{thm:cubic_property_edge_direction},~\ref{thm:cubic_property_convec_cube}, and~\ref{thm:cubic_property_injective} together with our bijection between facets and cells gives us that~$\mathcal{C}_\delta$ is an embedding of the~$\delta$-permutreehedron.
\end{proof}

\section*{Acknowledgments}

The author is thankful to Viviane Pons and Vincent Pilaud for proposing this problem and their guidance during the writing and development of this paper. The author also thanks Camille Combe, Jean-Philippe Labbé, and Clément Cheneviere for several interesting discussions, the LIGM team of Université Paris-Est Marne-la-Vallée for several readings of a previous extended abstract version, and Jose Bastidas and an anonymous referee for useful comments. The author was partially supported for this project by the GALaC team at the LISN (Université Paris-Saclay) and the department of mathematics of the Universidad de los Andes.

\bibliographystyle{alpha}
\bibliography{bibliography.bib}

\Addresses{}

\end{document}